\newcommand{\negeer}[1]{}
\def\notintp{\mathrel{/\kern-0.6em\rhd}} 
\newcommand{\pa}{\ensuremath{{\mathrm{PA}}}\xspace}
\newcommand{\il}{{\ensuremath{\textup{\textbf{IL}}}}\xspace}
\newcommand{\extil}[1]{\ensuremath{\textup{\textbf{IL}}{\sf\ensuremath{#1}}}\xspace}
\newcommand{\gl}{{\ensuremath{\textup{\textbf{GL}}}}\xspace}
\newcommand{\intl}[1]{{\ensuremath {\textup{\textbf{IL}}}({\rm #1})}}
\newcommand{\ilm}{\extil{M}}
\newcommand{\ilp}{\extil{P}}
\newcommand{\ilx}{\extil{X}}
\newcommand{\formal}[1]{\ensuremath{{\sf {#1}}\xspace}}
\newcommand{\prop}{\formal{Prop}\xspace}
\newcommand{\mcs}{\ensuremath{\textup{MCS}}\xspace}
\newcommand{\eqbydef}{:=}
\newcommand{\formil}{\ensuremath{{\sf Form}_{\il}} \xspace}
\newcommand{\dis}{\bigvee \hspace{-2.5mm} \bigvee}
\newcommand{\depth}[1]{{\ensuremath{{\sf depth}(#1)}}\xspace}
\newcommand{\sucs}{\prec}
\newcommand{\crit}[1]{\sucs_{#1}}
\newcommand{\boxin}{\subseteq_{\Box}}
\newcommand{\crone}[2]{{\ensuremath{\mathcal{C}^{#1}_{#2}}}\xspace}
\newcommand{\geone}[2]{{\ensuremath{\mathcal{G}^{#1}_{#2}}}\xspace}
\newcommand{\adset}[1]{{\ensuremath{\mathcal{#1}}\xspace}}
\newcommand{\ext}{\subseteq}
 \theoremstyle{plain}
 \newtheorem{theorem}{Theorem}[section]
 \newtheorem{lemma}[theorem]{Lemma}
 \newtheorem{corollary}[theorem]{Corollary}
 \theoremstyle{definition}
 \newtheorem{definition}[theorem]{Definition}
 \newtheorem{claim}{Claim}
\title{
Self Provers and $\Sigma_1$ Sentences}
\author{E. Goris\\ and\\ J. J. Joosten}
\date{2012}
\begin{document}

\maketitle

\begin{abstract}
This paper is the second in a series of three papers. All three papers deal with interpretability logics and related matters. In the first paper a construction method was exposed to obtain models of these logics. Using this method, we obtained some completeness results, some already known, and some new. 

In this paper, we will set the construction method to work to obtain more results. First, the modal completeness of the logic \ilm is proved using the construction method. This is not a new result, but by using our new proof we can obtain new results. Among these new results are some admissible rules for \ilm and \gl. 

Moreover, the new proof will be used to classify all the essentially $\Delta_1$ and also all the essentially $\Sigma_1$ formulas of \ilm. Closely related to essentially $\Sigma_1$ sentences are the so-called \emph{self provers}. A self-prover is a formula $\varphi$ which implies its own provability, that is $\varphi \to \Box \varphi$. Each formula $\varphi$ will generate a self prover $\varphi \wedge \Box \varphi$. We will use the construction method to characterize those sentences of \gl that generate a self prover that is trivial in the sense that it is $\Sigma_1$.

\end{abstract}

%





\section{Introduction}

Mathematical interpretations occur everywhere in (meta) mathematical practice. Interpretability logics study structural behavior of interpretations. One such logic, the logic \ilm, describes the structural behavior of interpretations over theories like Peano Arithmetic. In this paper we shall first study this logic \ilm and then use our findings to derive new results mainly related to $\Sigma_1$ sentences of theories like Peano Arithmetic.

This paper is the second in a series of three. For more background on interpretations and their corresponding logics we refer to the first part of this paper \cite{jogo:mm08}. Also, all definitions used in this paper occur with some motivation and background in \cite{jogo:mm08}. For completeness, self-containenedness and for readability we shall include a short recap in this paper of those technicalities that were introduced in \cite{jogo:mm08} and that are central to this paper.

\section{A concise recap: central notions of this paper}
In this paper we shall heavily resort to some rather technical results obtained in \cite{jogo:mm08}. In particular, certain parts of proofs in \cite{jogo:mm08} shall be re-used here. In this section, we shall state those parts of that paper which are necessary for results further on.

\subsection{Interpretability logics}

The modal sentences in this paper are mostly in the language of interpretability which is defined as follows. 
\[
\formil \eqbydef \bot \mid \prop \mid (\formil \rightarrow \formil) \mid (\Box \formil)
\mid (\formil \rhd \formil) 
\]

Here \prop is a countable set of propositional 
variables $p,q,r,s,t,p_0,p_1,\ldots$.
We employ the usual definitions of the logical operators
$\neg , \vee , \wedge$ and $\leftrightarrow$. Also shall we write 
$\Diamond \varphi$ for $\neg \Box \neg \varphi$. 
Formulas that start with a $\Box$ are called box-formulas or
$\Box$-formulas. Likewise we talk of $\Diamond$-formulas.

For standard reading conventions on bracketing please refer to \cite{jogo:mm08}. 

The basic interpretability logic is called \il and is captured in the following definition.

\begin{definition}\label{defi:il}
The logic \il is the smallest set of formulas being closed under
the rules of Necessitation and of Modus Ponens, that contains
all tautological formulas and all instantiations of the following
axiom schemata.

\begin{enumerate}
\item[${\sf L1}$]\label{ilax:l1}
        $\Box(A\rightarrow B)\rightarrow(\Box A\rightarrow\Box B)$
\item[${\sf L2}$]\label{ilax:l2}
        $\Box A\rightarrow \Box\Box A$
\item[${\sf L3}$]\label{ilax:l3}
        $\Box(\Box A\rightarrow A)\rightarrow\Box A$
\item[${\sf J1}$]\label{ilax:j1}
        $\Box(A\rightarrow B)\rightarrow A\rhd B$
\item[${\sf J2}$]\label{ilax:j2}
        $(A\rhd B)\wedge (B\rhd C)\rightarrow A\rhd C$
\item[${\sf J3}$]\label{ilax:j3}
        $(A\rhd C)\wedge (B\rhd C)\rightarrow A\vee B\rhd C$
\item[${\sf J4}$]\label{ilax:j4}
        $A\rhd B\rightarrow(\Diamond A\rightarrow \Diamond B)$
\item[${\sf J5}$]\label{ilax:j5}
        $\Diamond A\rhd A$
\end{enumerate}
\end{definition}
We will write $\il \vdash \varphi$ for $\varphi \in \il$. If $\sf X$ is a set of axiom schemata we will denote by \extil{X} the
logic that arises by adding the axiom schemata in $\sf X$ to \il. G\"odel L\"ob's logic \gl is obtained from \il by omitting all the $\sf J$ axioms and not allowing the $\rhd$ modality in the language.

The standard semantics for interpretability logics are given by the following definitions.

\begin{definition}\label{defi:frames}
An \il-frame is a triple $\langle W,R,S \rangle$. Here $W$ is a non-empty countable
universe,
$R$ is a binary relation on $W$ and $S$ is a set of binary relations on $W$, indexed 
by elements of $W$.
The $R$ and $S$ satisfy the following requirements.
\begin{enumerate}
\item
$R$ is conversely well-founded\footnote{A relation $R$ on $W$ is called 
conversely well-founded if every non-empty subset of $W$ has an $R$-maximal element.}

\item
$xRy \ \& \ yRz \rightarrow xRz$

\item
$yS_xz \rightarrow xRy \ \& \ xRz$

\item
$xRy \rightarrow yS_x y$

\item
$xRyRz \rightarrow yS_x z$ \label{defi:point:inclusion}

\item
$uS_x v S_x w \rightarrow u S_x w$

\end{enumerate}
\end{definition}
\il-frames are sometimes also called Veltman frames.
We will on occasion speak of $R$ or $S_x$ transitions instead of relations.
If we write $ySz$, we shall mean that $yS_xz$ for some $x$. 
$W$ is sometimes called the universe, or domain, of the frame and its elements
are referred to as worlds or nodes. With $x{\upharpoonright}$ we shall denote
the set $\{ y \in W \mid x Ry \}$.
We will often represent $S$ by a ternary relation in the canonical way, writing
$\langle x,y,z\rangle$ for $yS_xz$.

\begin{definition}
An \il-model is a quadruple 
$\langle W, R , S, \Vdash \rangle$. Here $\langle W, R , S, \rangle$ is 
an \il-frame and $\Vdash$ is a subset of $W \times \prop$. We write
$w \Vdash p$ for $\langle w,p\rangle \in \ \Vdash$.
As usual, $\Vdash$ is extended to a subset $\widetilde{\Vdash}$ of 
$W \times \formil$ by demanding the following.
\begin{itemize}

\item
$w \widetilde{\Vdash} p$ iff $w \Vdash p$ for $p \in \prop$

\item
$w \not \widetilde{\Vdash} \bot$

\item
$w \widetilde{\Vdash} A \rightarrow B$ iff $w \not \widetilde{\Vdash} A$ or 
$w \widetilde{\Vdash} B$

\item
$w \widetilde{\Vdash} \Box A$ iff 
$\forall v \ (wRv \Rightarrow v \widetilde{\Vdash} A)$

\item
$w \widetilde{\Vdash} A \rhd B$ iff 
$\forall u \ (w R u \wedge u\widetilde{\Vdash} A
\Rightarrow \exists v \ (u S_w v  \widetilde{\Vdash} B))$

\end{itemize}
\end{definition}

Note that $\widetilde{\Vdash}$ is completely determined by $\Vdash$.
Thus we will denote $\widetilde{\Vdash}$ also by $\Vdash$. It is an easy observation that the truth of a modal formula in a particular world in the model is completely determined by the part of the model that ``can be seen" from that world. This observation is used often and therefore we explicitly restate it here. 

\begin{definition}[Generated Submodel]
Let $M = \langle W, R, S , \Vdash \rangle$ be an \il-model
and let $m\in M$. We define $m{\upharpoonright}*$ to be the
set $\{ x\in W \mid x{=}m \vee mRx \}$. 
By $M{\upharpoonright}m$ we denote the 
submodel generated by $m$ defined as follows.
\[
M{\upharpoonright}m \eqbydef \langle m{\upharpoonright}*, 
R\cap (m{\upharpoonright}*)^2 , \bigcup_{x\in m{\upharpoonright}*}
S_x \cap (m{\upharpoonright}*)^2, \Vdash \cap 
(m{\upharpoonright}* \times {\sf Prop})  \rangle
\]
\end{definition}

\begin{lemma}[Generated Submodel Lemma]\label{lemm:gesulem}
Let $M$ be an \il-model and let $m\in M$. For all 
formulas $\varphi$ and all $x \in m{\upharpoonright}*$ we have that
\[
M{\upharpoonright}m,x\Vdash \varphi \ \ \mbox{ iff }\ \ 
M,x\Vdash \varphi .
\]
\end{lemma}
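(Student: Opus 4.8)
The plan is to argue by induction on the structure of the formula $\varphi$, exploiting the hallmark fact behind generated submodels: the quantifiers over $R$- and $S$-successors occurring in the satisfaction clauses never ``leave'' the generated domain $m{\upharpoonright}*$. Write $W' = m{\upharpoonright}*$ and let $R', S'$ denote the relations of $M{\upharpoonright}m$, so that $R' = R \cap (W')^2$ and $S'$ is the union of the $S_x \cap (W')^2$ over $x \in W'$.

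First I would record the two closure properties on which everything hinges. For closure under $R$: if $x \in W'$ and $xRy$ in $M$, then $y \in W'$. Indeed, either $x = m$, whence $mRy$ directly, or $mRx$, whence $mRy$ by transitivity (frame condition 2); in either case $y \in W'$, and $xR'y$ holds since $R' = R \cap (W')^2$. Thus for $x \in W'$ the set of $R$-successors of $x$ is the same computed in $M$ or in $M{\upharpoonright}m$. For closure under $S$: if $x \in W'$ and $yS_xz$ in $M$, then frame condition 3 gives $xRy$ and $xRz$, so $y,z \in W'$ by the previous closure, and hence $yS'_xz$ by the definition of $S'$. Therefore, for $x \in W'$, the $S_x$-transitions between worlds of $W'$ also coincide in the two models.

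With these facts in hand the induction is routine. The base cases $\varphi = \bot$ and $\varphi = p \in \prop$ are immediate, the latter because $\Vdash$ is merely restricted to $W' \times \prop$. The Boolean case $\varphi = A \rightarrow B$ follows directly from the induction hypothesis for $A$ and $B$ at the same world $x \in W'$. For $\varphi = \Box A$: by definition $M{\upharpoonright}m, x \Vdash \Box A$ iff $A$ holds in $M{\upharpoonright}m$ at every $R'$-successor of $x$; by $R$-closure these are exactly the $R$-successors of $x$ in $M$, all in $W'$, so the induction hypothesis for $A$ turns this into ``$A$ holds in $M$ at every $R$-successor of $x$'', i.e. $M, x \Vdash \Box A$. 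The crux is $\varphi = A \rhd B$. Here $M{\upharpoonright}m, x \Vdash A \rhd B$ unfolds to: for every $u$ with $xR'u$ and $M{\upharpoonright}m, u \Vdash A$ there is $v$ with $uS'_xv$ and $M{\upharpoonright}m, v \Vdash B$. By $R$-closure the relevant $u$ range over exactly the $R$-successors of $x$ in $M$ (all in $W'$), and by $S$-closure—together with frame condition 3, which already forces every $S_x$-successor of $u$ into $W'$—the admissible witnesses $v$ are exactly the $S_x$-successors of $u$ in $M$. Applying the induction hypothesis to $A$ at $u$ and to $B$ at $v$ matches this quantifier clause term by term with the clause defining $M, x \Vdash A \rhd B$, yielding the equivalence.

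The only point requiring care—and the step I expect to be the main obstacle to \emph{state} cleanly rather than a conceptual difficulty—is verifying that no existential witness or universal counterexample for the $\rhd$ (or $\Box$) clause can lie outside $W'$. This is precisely what the two closure properties guarantee, via frame conditions 2 and 3; once they are recorded, the equivalence of the two satisfaction clauses reduces to a direct comparison of quantifier ranges.
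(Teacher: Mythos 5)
Your proof is correct: the induction on the structure of $\varphi$, grounded in the two closure properties of $m{\upharpoonright}*$ under $R$ (via transitivity) and under $S_x$ for $x \in m{\upharpoonright}*$ (via the condition $yS_xz \rightarrow xRy \;\&\; xRz$), is exactly the standard argument, and the paper itself states the lemma without proof precisely because this routine induction is the intended one. Your explicit verification that the quantifier ranges in the $\Box$ and $\rhd$ clauses coincide in $M$ and $M{\upharpoonright}m$ is the only point of substance, and you handle it correctly.
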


In \cite{jogo:mm08} models are built by gluing sets of modal sentences together. We shall briefly recapitulate the main definitions of those sets of sentences here.

\begin{definition}
A set $\Gamma$ is \extil{X}-consistent iff $\Gamma \not \vdash_{\extil{X}} \bot$.
An \extil{X}-consistent set is maximal \extil{X}-consistent if for 
any $\varphi$, either $\varphi \in \Gamma$ or $\neg \varphi \in \Gamma$.
\end{definition}

We will often abbreviate ``maximal consistent set'' by \mcs and 
refrain from explicitly mentioning the logic \extil{X} when the
context allows us to do so.
We define three useful relations on \mcs's, the \emph{successor} relation $\sucs$, 
the \emph{$C$-critical successor} relation $\crit{C}$ and the 
\emph{Box-inclusion} relation $\boxin$.

\begin{definition}\label{defi:mcsrel}
Let $\Gamma$ and $\Delta$ denote maximal \extil{X}-consistent sets.
\begin{itemize}
\item
$\Gamma \sucs \Delta \eqbydef \Box A \in \Gamma \Rightarrow A, \Box A \in \Delta$
\item
$\Gamma \crit{C} \Delta \eqbydef A \rhd C \in \Gamma 
\Rightarrow \neg A , \Box \neg A \in \Delta$
\item
$\Gamma  \boxin \Delta \eqbydef \Box A \in \Gamma \Rightarrow \Box A \in \Delta$
\end{itemize}
\end{definition}

It is clear that $\Gamma \crit{C} \Delta \Rightarrow \Gamma \sucs \Delta$.
For, if $\Box A \in \Gamma$ then $\neg A \rhd \bot \in \Gamma$. Also 
$\bot \rhd C \in \Gamma$, whence $\neg A \rhd C \in \Gamma$. If now 
$\Gamma \crit{C} \Delta$ then $A , \Box A \in \Delta$, whence 
$\Gamma \sucs \Delta$. It is also clear that 
$\Gamma \crit{C} \Delta \sucs \Delta' \Rightarrow \Gamma \crit{C} \Delta'$.

\begin{lemma}\label{lemm:botcrit}
Let $\Gamma$ and $\Delta$ denote maximal \extil{X}-consistent sets. We have
$\Gamma \sucs \Delta$ iff $\Gamma \crit{\bot} \Delta$.
\end{lemma}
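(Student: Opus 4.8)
The plan is to observe that one direction of the biconditional is already in hand and to reduce the other direction to a single modal equivalence. The implication $\Gamma \crit{\bot} \Delta \Rightarrow \Gamma \sucs \Delta$ is nothing but the special case $C \eqbydef \bot$ of the inclusion $\Gamma \crit{C}\Delta \Rightarrow \Gamma \sucs \Delta$ that is established in the paragraph just preceding the statement, so no further work is required for that half.

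For the converse, the key observation I would isolate is that $\il \vdash A \rhd \bot \leftrightarrow \Box \neg A$; since $\il \subseteq \extil{X}$, this is also an \extil{X}-theorem. The right-to-left half comes straight from axiom {\sf J1} with $B \eqbydef \bot$: here $\Box \neg A$ is literally $\Box (A \rightarrow \bot)$, and {\sf J1} then delivers $A \rhd \bot$. For the left-to-right half I would instantiate axiom {\sf J4} with $B \eqbydef \bot$, obtaining $A \rhd \bot \rightarrow (\Diamond A \rightarrow \Diamond \bot)$. Since $\Diamond \bot$ is $\neg \Box \top$ and $\Box \top$ is an \il-theorem by Necessitation, $\Diamond \bot$ is \il-refutable, so the consequent collapses to $\neg \Diamond A$, i.e.\ to $\Box \neg A$; this yields $A \rhd \bot \rightarrow \Box \neg A$.

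With this equivalence in place the converse is immediate. Assuming $\Gamma \sucs \Delta$, suppose $A \rhd \bot \in \Gamma$. As $\Gamma$ is an \extil{X}-\mcs, hence deductively closed, the equivalence gives $\Box \neg A \in \Gamma$. Applying the defining clause of $\sucs$ to the box-formula $\Box \neg A$ produces $\neg A, \Box \neg A \in \Delta$, which is exactly the condition defining $\Gamma \crit{\bot} \Delta$. I do not anticipate any genuine obstacle here: the entire content is the \il-provable equivalence $A \rhd \bot \leftrightarrow \Box \neg A$, and the only mildly delicate point is noticing that $\Diamond \bot$ is \il-refutable so that the {\sf J4} instance simplifies to $\Box \neg A$.
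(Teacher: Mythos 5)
Your proof is correct. The paper itself states this lemma without proof (it is part of the recap of results imported from \cite{jogo:mm08}), and your argument is the standard one: the forward direction via the \il-equivalence $A \rhd \bot \leftrightarrow \Box\neg A$ (obtained from {\sf J1} and {\sf J4} exactly as you describe, using that $\Diamond\bot$ is refutable) together with deductive closure of the \mcs $\Gamma$, and the converse as the instance $C=\bot$ of the inclusion $\crit{C}\,\subseteq\,\sucs$ already noted in the text.
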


\subsection{The construction method and the Main Lemma}
The main purpose of \cite{jogo:mm08} was to provide some background in the modal theory of provability logics. Moreover, in that paper, a construction method was developed. The construction method provided a way of gluing sets of modal sentences together as to obtain models with desired properties. The ideas involved are quite similar to the definition of canonical models with the exception that the model is constructed step by step rather than defined at once and, moreover, only that part of the model that you need is constructed and nothing more.

Thus, the building blocks are maximal consistent sets of modal interpretability logics. Instead of gluing these sets together outright, we shall glue variables $u,v, \ldots$ together and label these variables by the sets. We denote the labeling by $\nu$. Thus, if we added a new element $x$, by $\nu(x)$ we refer to the corresponding set of modal sentences. Likewise, certain $R$ transitions will be labeled via $\nu$ with a single formula, for example $\nu(\langle x,y \rangle) =C$.

The following two notions are central to the construction method. As they are so central to the paper we strongly advice the reader who is novice to this field to read the motivation of these notions in Section 3 of \cite{jogo:mm08}.

\begin{definition}\label{defi:critcone}
Let $x$ be a world in some \extil{X}-labeled frame $\langle W, R, S, \nu \rangle$.
The \emph{$C$-critical cone above $x$}, we write $\crone{C}{x}$, is defined inductively as
\begin{itemize}
\item
$\nu (\langle x,y \rangle)=C \Rightarrow y \in \crone{C}{x}$
\item
$x' \in \crone{C}{x} \ \& \  x'S_x y \Rightarrow y\in \crone{C}{x}$
\item
$x' \in \crone{C}{x} \ \& \  x'R y \Rightarrow y\in \crone{C}{x}$
\end{itemize}
\end{definition}

\begin{definition}\label{defi:gencone}
Let $x$ be a world in some \extil{X}-labeled frame $\langle W, R, S, \nu \rangle$.
The \emph{generalized $C$-cone above $x$}, we write $\geone{C}{x}$, is defined inductively as
\begin{itemize}
\item
$y \in \crone{C}{x} \Rightarrow  y \in \geone{C}{x}$
\item
$x' \in \geone{C}{x}  \ \& \ x'S_wz \Rightarrow z \in \geone{C}{x}$ for arbitrary $w$
\item
$x' \in \geone{C}{x}  \ \& \ x'Ry \Rightarrow y \in \geone{C}{x}$
\end{itemize}
\end{definition}

The construction method in essence deals step by step with existential requirements -- so-called \emph{problems}-- and with universal requirements -- so-called \emph{deficiencies} -- both defined below.

\begin{definition}[Problems]\label{defi:problems}
Let \adset{D} be some set of sentences. 
A \emph{\adset{D}-problem} is a pair $\langle x , \neg (A \rhd B)\rangle$ such that 
$\neg (A \rhd B) \in \nu (x) \cap \adset{D}$ 
and for no $y \in \crone{B}{x}$ we have $A \in \nu (y)$.
\end{definition}

\begin{definition}[Deficiencies]
Let \adset{D} be some set of sentences and let 
$F = \langle W, R, S, \nu \rangle$ be an \extil{X}-labeled frame. A \emph{\adset{D}-deficiency}
is a triple $\langle x,y, C \rhd D \rangle$ with $xRy$, 
$C\rhd D \in \nu (x) \cap \adset{D}$, and $C\in \nu (y)$,
but for no $z$ with $yS_xz$ we have $D\in \nu (z)$.
\end{definition}

If the set $\adset{D}$ is clear or fixed, we will just speak about problems and deficiencies.
The labeled frames we will construct are always supposed to satisfy some minimal reasonable
requirements. We summarize these in the notion of adequacy.

\begin{definition}[Adequate frames]
A frame is called \emph{adequate} if the following conditions are satisfied.
\begin{enumerate}
\item
$xRy \Rightarrow \nu (x) \sucs \nu (y)$

\item
$A \neq B \Rightarrow \geone{A}{x} \cap \geone{B}{x} = \varnothing$

\item
$y \in \crone{A}{x} \Rightarrow \nu (x) \crit{A} \nu (y)$
\end{enumerate}
\end{definition}

We need three more technical definitions before we can re-state the Main Lemma.

\begin{definition}
Let $\mathcal{D}$ be some set of formulas and let $M$ be an interpretability model. We say that \emph{a Truth-Lemma holds on $M$ with respect to $\mathcal{D}$} if for all $x$ in $M$ we have that

\[
 \forall \, \varphi {\in} 
\adset{D} \ [ x\Vdash \varphi \mbox{ iff. } \varphi \in x]. 
\]
 
\end{definition}

\begin{definition}[Depth]
The \emph{depth} of a finite frame $F$, we will write $\depth{F}$ is the maximal length of 
sequences of the form $x_0R \ldots R x_n$. (For convenience we define 
$\max (\varnothing)=0$.)
\end{definition}

\begin{definition}[Union of Bounded Chains]
An indexed set $\{ F_i\}_{i \in \omega}$ of labeled frames is called a \emph{chain} if
for all $i$, $F_i \ext F_{i+1}$. It is called a \emph{bounded chain} if for some 
number $n$, $\depth{F_i}\leq n$ for all $i\in \omega$. The \emph{union} of a bounded 
chain $\{ F_i\}_{i \in \omega}$ of labeled frames $F_i$ is defined as follows.
\[
\cup_{i\in \omega} F_i 
\eqbydef \langle \cup_{i\in \omega} W_i,  \cup_{i\in \omega}R_i,  \cup_{i\in \omega}S_i, 
\cup_{i\in \omega}\nu_i \rangle
\]
\end{definition}

Finally the Main Lemma can be formulated.

\begin{lemma}[Main Lemma] \label{lemm:main}

Let $\extil{X}$ be an interpretability logic and let 
$\mathcal{C}$ be a (first or higher order) frame condition such that
for any \il-frame $F$ we have 
\[
F \models \mathcal{C} \Rightarrow F \models {\sf X}.
\] 
Let $\adset{D}$ be a finite set of sentences.
Let $\mathcal{I}$ be a set of so-called \emph{invariants} 
of labeled frames so that we have 
the following properties.
\begin{itemize}
\item
$F \models \mathcal{I}^{\mathcal{U}} \Rightarrow
F \models \mathcal{C}$,
where $\mathcal{I}^{\mathcal{U}}$ is that part of $\mathcal{I}$ that is closed
under bounded unions of labeled frames.

\item
$\mathcal{I}$ contains the following invariant: 
$xRy \rightarrow \exists \, A {\in} (\nu (y) \setminus \nu (x))\cap 
\{ \Box \neg D \mid D$  a subformula of some  $B \in \adset{D} \}$.
%
%
\item
For any  adequate labeled frame  $F$, satisfying all the invariants,
we have the following.

\begin{itemize}

\item
Any \adset{D}-problem of $F$ can be eliminated by extending 
$F$
in a way that conserves all invariants.

\item
Any \adset{D}-deficiency of $F$
can be eliminated by extending 
$F$
in a way that conserves all invariants.
\end{itemize}
\end{itemize}

In case such a set of invariants $\mathcal{I}$ exists, we have that
any \extil{X}-labeled adequate frame $F$ satisfying all the invariants 
can be extended to 
some labeled adequate \extil{X}-frame $\hat F$ on which a truth-lemma with 
respect to 
$\adset{D}$ holds.

Moreover, if for any finite \adset{D} that is closed under
subformulas and single negations, a corresponding set of  
invariants $\mathcal{I}$ can be found as above and such that moreover 
$\mathcal{I}$
holds on any one-point 
labeled frame, we have that $\extil{X}$ is a complete logic.
\end{lemma}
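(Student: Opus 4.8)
The plan is to run the construction method as a \emph{fair, step-by-step} extension of the given frame $F$. I would build a chain $F = F_0 \ext F_1 \ext F_2 \ext \cdots$ of adequate labeled frames, each satisfying every invariant in $\mathcal{I}$, and then set $\hat F \eqbydef \cup_{i\in\omega}F_i$. Since every \il-frame has a countable universe, at each stage there are only countably many $\adset{D}$-problems and $\adset{D}$-deficiencies; I would fix a fair dovetailing of all of them (pairing stages with candidate problems/deficiencies) so that each problem or deficiency that ever appears is scheduled at some later stage. At step $i$ the scheduled item is inspected: if it is still a genuine $\adset{D}$-problem (resp.\ deficiency) of $F_i$, I apply the hypothesised elimination procedure to obtain $F_{i+1}$ with $F_i \ext F_{i+1}$, again adequate and $\mathcal{I}$-satisfying; otherwise I set $F_{i+1} \eqbydef F_i$. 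Fairness then guarantees that $\hat F$ contains no surviving $\adset{D}$-problem and no surviving $\adset{D}$-deficiency.

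The decisive structural ingredient is boundedness of the chain, and this is exactly what the designated invariant $xRy \rightarrow \exists\,A \in (\nu(y)\setminus\nu(x)) \cap \{\Box\neg D \mid D \text{ a subformula of some } B \in \adset{D}\}$ buys us. As $\adset{D}$ is finite, only finitely many formulas $\Box\neg D$ of this form exist, and along any $R$-chain each successive world acquires a fresh one; hence the length of every $R$-chain is bounded by that finite number, uniformly in $i$, so $\depth{F_i}\le n$ for a fixed $n$. This makes $\{F_i\}_{i\in\omega}$ a bounded chain, so $\hat F$ is well-defined, and the same bound forbids infinite $R$-ascending chains, so $R$ is conversely well-founded and $\hat F$ is a legitimate \il-frame. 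Invoking the first property of $\mathcal{I}$ we obtain $\hat F \models \mathcal{I}^{\mathcal{U}}$, hence $\hat F\models\mathcal{C}$, hence $\hat F\models{\sf X}$; so $\hat F$ is an adequate $\extil{X}$-frame.

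It remains to verify a Truth-Lemma on $\hat F$ with respect to $\adset{D}$, by induction on $\varphi\in\adset{D}$ (which is closed under subformulas). The atomic case is the definition of $\Vdash$ and the Boolean cases are immediate. For $\Box A$ the forward direction uses adequacy clause~1, since $xRy$ gives $\nu(x)\sucs\nu(y)$ and hence propagates $A$ downward; the backward direction, when $\Box A\notin\nu(x)$, requires an $R$-successor realising $\neg A$, and such $\Diamond$-witnesses are supplied by the construction, reducing to problem-elimination via the \il-equivalence $\Box A\leftrightarrow(\neg A\rhd\bot)$ and Lemma~\ref{lemm:botcrit}. The genuinely hard case is $A\rhd B$. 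If $A\rhd B\in\nu(x)$ then no $\adset{D}$-deficiency survives at $x$: for every $y$ with $xRy$ and $A\in\nu(y)$ fairness has produced a $z$ with $yS_xz$ and $B\in\nu(z)$, and the induction hypothesis turns this into the semantic witness required for $x\Vdash A\rhd B$. Conversely, if $\neg(A\rhd B)\in\nu(x)$ then, as $\adset{D}$ is closed under single negations, this is a genuine $\adset{D}$-problem that has been eliminated, giving $y\in\crone{B}{x}$ with $A\in\nu(y)$; the cone definitions (Definitions~\ref{defi:critcone} and~\ref{defi:gencone}) together with adequacy, namely the disjointness $A\ne B\Rightarrow\geone{A}{x}\cap\geone{B}{x}=\varnothing$ and the critical-relation clause $y\in\crone{A}{x}\Rightarrow\nu(x)\crit{A}\nu(y)$, guarantee that this $A$-world is reached by an $S_x$-transition that can never be redirected into a $B$-witness, so $x\not\Vdash A\rhd B$. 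Controlling this interplay between the critical and generalized cones, the critical-successor relation $\crit{B}$, and adequacy is where the main effort of the argument lies; everything else is bookkeeping.

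For the final clause, suppose $\extil{X}\not\vdash\varphi$ and take $\adset{D}$ to be the closure of $\{\varphi\}$ under subformulas and single negations, which is finite and has the required closure properties. By assumption a suitable set of invariants $\mathcal{I}$ exists and holds on every one-point labeled frame. Pick a \mcs $\Gamma$ with $\neg\varphi\in\Gamma$ and start the construction from the one-point frame labeled by $\Gamma$; it is adequate and satisfies $\mathcal{I}$. The first part then produces an $\extil{X}$-model $\hat F$ carrying a Truth-Lemma for $\adset{D}$, and since $\neg\varphi\in\adset{D}$ its root falsifies $\varphi$. Hence $\varphi$ is not valid on all $\extil{X}$-frames, so $\extil{X}$ is complete.
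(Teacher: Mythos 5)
Your proposal is essentially the intended argument: the paper itself imports this lemma from \cite{jogo:mm08} without reproving it, and your fair, dovetailed elimination of problems and deficiencies, the depth bound extracted from the designated $\Box\neg D$-invariant (using persistence of box-formulas along $\sucs$ so that each $R$-step contributes a genuinely new member of a finite set), the passage from $\mathcal{I}^{\mathcal{U}}$ to $\mathcal{C}$ and hence to ${\sf X}$, and the truth-lemma induction with the critical-cone argument for the negative $\rhd$-case are exactly the standard construction-method proof. The one step to watch is the $\Diamond$-case of the truth lemma: as recapped here, \adset{D}-problems only cover formulas $\neg(A\rhd B)\in\adset{D}$, so the $R$-successor witnessing $\neg\Box A\in\nu(x)$ must be obtained by reading $\Box A$ as $\neg A\rhd\bot$ and making the corresponding negated $\rhd$-formula available to the elimination machinery, which closure of \adset{D} under subformulas and single negations alone does not literally provide.
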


The following two lemmata indicate how problems and deficiencies can be dealt with.

\begin{lemma}\label{lemm:problems}
Let $\Gamma$ be a maximal \extil{X}-consistent set such that $\neg (A\rhd B) \in \Gamma$.
Then there exists a maximal \extil{X}-consistent set $\Delta$ such that 
$\Gamma \crit{B} \Delta \ni A, \Box \neg A$.
\end{lemma}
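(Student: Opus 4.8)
The plan is a Lindenbaum-style argument: I would first isolate the smallest set of formulas that any witness $\Delta$ is forced to contain, prove that this set is \extil{X}-consistent, and then extend it to a maximal consistent $\Delta$ by Lindenbaum's lemma. Unfolding the definition of $\crit{B}$, the requirements on $\Delta$ are exactly $A \in \Delta$, $\Box \neg A \in \Delta$, and $\neg E, \Box \neg E \in \Delta$ for every $E$ with $E \rhd B \in \Gamma$. Accordingly I set
\[
\Sigma \eqbydef \{A, \Box \neg A\} \cup \{\neg E, \Box \neg E \mid E \rhd B \in \Gamma\}
\]
and aim to show $\Sigma \not\vdash_{\extil{X}} \bot$. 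Any maximal consistent $\Delta \supseteq \Sigma$ then witnesses the lemma: $A, \Box \neg A \in \Delta$ is immediate, and the defining clause of $\Gamma \crit{B} \Delta$ holds by the very construction of $\Sigma$.

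The real content is the consistency of $\Sigma$, and I expect this to be the main obstacle. Suppose it fails; since derivations are finite there are $E_1, \dots, E_n$ with each $E_i \rhd B \in \Gamma$ such that $\{A, \Box \neg A\} \cup \{\neg E_i, \Box \neg E_i \mid 1 \le i \le n\}$ is already inconsistent. Writing $E \eqbydef \bigvee_i E_i$ and using that $\Box$ distributes over conjunction (so $\Box \neg E$ is provably equivalent to $\bigwedge_i \Box \neg E_i$), I can collapse this to $\vdash_{\extil{X}} (A \wedge \Box \neg A \wedge \neg E \wedge \Box \neg E) \to \bot$, equivalently $\vdash_{\extil{X}} (A \wedge \Box \neg A) \to (E \vee \Diamond E)$. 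Crucially $E \rhd B \in \Gamma$, by applying ${\sf J3}$ finitely often to the $E_i \rhd B \in \Gamma$. From here I push everything through the $\rhd$-axioms at the level of membership in $\Gamma$: by ${\sf J1}$ and Necessitation, $(A \wedge \Box \neg A) \rhd (E \vee \Diamond E) \in \Gamma$; from $E \rhd B \in \Gamma$, axiom ${\sf J5}$ gives $\Diamond E \rhd E$ and ${\sf J2}$ gives $\Diamond E \rhd B$, so ${\sf J3}$ yields $(E \vee \Diamond E) \rhd B \in \Gamma$; transitivity ${\sf J2}$ then gives $(A \wedge \Box \neg A) \rhd B \in \Gamma$.

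The last link is the key auxiliary fact $\vdash_{\il} A \rhd (A \wedge \Box \neg A)$, after which ${\sf J2}$ delivers $A \rhd B \in \Gamma$, contradicting $\neg(A \rhd B) \in \Gamma$ and so establishing the consistency of $\Sigma$. I would prove this auxiliary fact by splitting on $A \leftrightarrow (A \wedge \Box \neg A) \vee (A \wedge \Diamond A)$: the disjunct $(A \wedge \Box \neg A) \rhd (A \wedge \Box \neg A)$ is immediate from ${\sf J1}$, while for $(A \wedge \Diamond A) \rhd (A \wedge \Box \neg A)$ I combine ${\sf J5}$ (which gives $\Diamond(A \wedge \Box \neg A) \rhd (A \wedge \Box \neg A)$) with the purely provability-logical lemma $\vdash_{\gl} \Diamond A \to \Diamond(A \wedge \Box \neg A)$, whose contrapositive $\Box \neg(A \wedge \Box \neg A) \to \Box \neg A$ follows from L\"ob's axiom ${\sf L3}$. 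Merging the two disjuncts by ${\sf J3}$ and absorbing the equivalence via ${\sf J1}$ and ${\sf J2}$ gives $A \rhd (A \wedge \Box \neg A)$. I regard this self-prover-type fact as the crux of the whole argument: it is precisely what lets the consistency statement $E \vee \Diamond E$ about $A$ be transferred across the $\rhd$ to contradict $\neg(A \rhd B)$.
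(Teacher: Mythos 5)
Your proof is correct and follows essentially the same route as the paper's own argument (which this paper defers to its prequel \cite{jogo:mm08}): one extends $\{A,\Box\neg A\}\cup\{\neg E,\Box\neg E\mid E\rhd B\in\Gamma\}$ to a maximal consistent set after establishing its consistency via compactness, ${\sf J1}$--${\sf J5}$, and the key auxiliary theorem $\il\vdash A\rhd (A\wedge\Box\neg A)$, whose L\"ob-based derivation you reproduce accurately. Nothing is missing; in particular the degenerate case of an empty disjunction is still covered by your chain $A\rhd(A\wedge\Box\neg A)\rhd\bot\rhd B$.
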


\begin{lemma}\label{lemm:deficiencies}
Consider $C \rhd D \in \Gamma \crit{B} \Delta \ni C$. There exists $\Delta'$
with $\Gamma \crit{B} \Delta'\ni D, \Box \neg D$. 
\end{lemma}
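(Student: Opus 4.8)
The plan is to deduce this from the already-established Lemma~\ref{lemm:problems} rather than to rebuild a Lindenbaum construction from scratch. The observation that makes this possible is that, under the hypotheses, the premise $C \rhd D$ together with the criticality of $\Delta$ forces $D \rhd B$ to be \emph{absent} from $\Gamma$, and the absence of an interpretability assertion is precisely the input that Lemma~\ref{lemm:problems} consumes.

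First I would show that $\neg (D \rhd B) \in \Gamma$. Suppose instead that $D \rhd B \in \Gamma$. Since $C \rhd D \in \Gamma$ by hypothesis, the transitivity axiom \textsf{J2} yields $C \rhd B \in \Gamma$. But $\Gamma \crit{B} \Delta$ means exactly that $E \rhd B \in \Gamma$ implies $\neg E, \Box \neg E \in \Delta$; applying this with $E := C$ gives $\neg C \in \Delta$. This contradicts $C \in \Delta$ together with the consistency of the maximal consistent set $\Delta$. Hence $D \rhd B \notin \Gamma$, and since $\Gamma$ is maximal consistent, $\neg(D \rhd B) \in \Gamma$.

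With this in hand the result is immediate: I would invoke Lemma~\ref{lemm:problems} with the formula $D$ playing the role of ``$A$'' there. Because $\neg (D \rhd B) \in \Gamma$, that lemma delivers a maximal \extil{X}-consistent set $\Delta'$ with $\Gamma \crit{B} \Delta' \ni D, \Box \neg D$, which is exactly the conclusion sought.

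The point worth flagging is where the genuine content hides. This reduction is almost entirely bookkeeping with the $\rhd$-axioms (\textsf{J2} for the cut, and the definition of $\crit{B}$); the real work has been pushed into Lemma~\ref{lemm:problems}. Should one instead want a self-contained argument, I would build $\Delta'$ directly as a Lindenbaum extension of $\Sigma := \{D, \Box \neg D\} \cup \{\neg A, \Box \neg A \mid A \rhd B \in \Gamma\}$; the only non-routine step is proving $\Sigma$ consistent, and there the crux is the interpretability theorem $\vdash D \rhd (D \wedge \Box \neg D)$. This last fact is the sole place Löb's axiom \textsf{L3} enters (via $\vdash \Diamond D \to \Diamond(D \wedge \Box \neg D)$, followed by \textsf{J1}, \textsf{J2}, \textsf{J5} and \textsf{J3}), and it is exactly what lets one strip the auxiliary conjunct $\Box \neg D$, passing from $(D \wedge \Box\neg D) \rhd B$ to $D \rhd B$ and so reaching the contradiction $C \rhd B \in \Gamma$. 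I expect this Löb-style step to be the only real obstacle; everything else is propositional manipulation and closure bookkeeping.
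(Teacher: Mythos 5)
Your proof is correct and is essentially the standard argument: the paper itself states this lemma without proof (it is recalled from \cite{jogo:mm08}), and the proof there is exactly your reduction --- use ${\sf J2}$ together with $C \in \Delta$ and $\Gamma \crit{B} \Delta$ to conclude $\neg(D \rhd B) \in \Gamma$, then invoke Lemma~\ref{lemm:problems} with $D$ in the role of $A$. Your closing remark correctly locates the genuine content (the L\"ob-style theorem $\vdash D \rhd D \wedge \Box\neg D$) inside Lemma~\ref{lemm:problems}, so nothing is missing.
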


\subsection{Modal Completeness of \il}

In \cite{jogo:mm08} the first application of the construction method was reproving the modal completeness of \il. Large parts of this new completeness for \il can be re-used in other proofs. We mention here the ingredients of the completeness proof of \il that will be re-used in this paper.\\

\medskip

The Main Lemma will basically add bits and pieces to a model until all the necessary requirements are met. By adding a bit to a labeled frame a structure will arise that is almost a new frame but not quite yet. Those structures are called \emph{quasi frames} and are defined below.

\begin{definition}\label{defi:quasiframes}
A \emph{quasi-frame} $G$ is a quadruple $\langle W,R,S,\nu \rangle$.
Here $W$ is a non-empty set of worlds, and $R$ a binary relation on $W$.
$S$ is a set of binary relations on $W$ indexed by elements of $W$.
The $\nu$ is a labeling as defined on labeled frames. Critical cones
and generalized cones are defined just in the same way as in the
case of labeled frames.
$G$ should posess the following properties.
\begin{enumerate}
\item
$R$ is conversely well-founded

\item
$yS_xz \rightarrow xRy \ \& \ xRz$

\item
$xRy \rightarrow \nu (x) \sucs \nu (y)$

\item
$A \neq B \rightarrow \geone{A}{x} \cap \geone{B}{x} = \varnothing$

\item
$y{\in} \crone{A}{x} \rightarrow \nu (x) \crit{A} \nu (y)$

\end{enumerate}
 
\end{definition}

Once the Main Lemma is around, the main effort in the proof of the modal completeness of \il lies in showing that each quasi frame can be extended to adequate labeled frame. We restate here  this fact and hint at the main ingredients of the proof.

\begin{lemma}[\il-closure]\label{lemm:extension}
Let $G=\langle W,R,S,\nu \rangle$ be a quasi-frame. There is an adequate 
\il-frame $F$ extending $G$. That is, $F=\langle W,R',S',\nu \rangle$
with $R\subseteq R'$ and $S\subseteq S'$.
\end{lemma}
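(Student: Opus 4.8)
The plan is to close the quasi-frame $G$ under the structural \il-conditions in the least destructive way: take $R'$ to be the transitive closure of $R$, and for each $x$ take $S'_x$ to be the transitive closure of
\[
S_x \;\cup\; \{\langle y,y\rangle : xR'y\} \;\cup\; \{\langle y,z\rangle : xR'y \ \&\ yR'z\}.
\]
The three added families are exactly what the \il-frame axioms force upon us: transitivity of $R$, the reflexivity clause $xRy\to yS_xy$, and the inclusion clause $xRyRz\to yS_xz$; closing $S'_x$ under transitivity then takes care of $uS_xvS_xw\to uS_xw$. By construction $R\subseteq R'$ and $S_x\subseteq S'_x$, so $F$ genuinely extends $G$.

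First I would dispose of the purely structural \il-conditions, which are almost immediate. Transitivity of $R'$ and the clauses $xR'y\to yS'_xy$, $xR'yR'z\to yS'_xz$ and $uS'_xvS'_xw\to uS'_xw$ hold by construction; and since every generator of $S'_x$ lives inside the $R'$-cone of $x$, so does all of $S'_x$ by transitivity, which gives $yS'_xz\to xR'y\ \&\ xR'z$. Converse well-foundedness is inherited, since an infinite $R'$-ascending chain would unfold into an infinite $R$-ascending chain. The first adequacy clause $xR'y\Rightarrow\nu(x)\sucs\nu(y)$ follows from the quasi-frame clause $xRy\Rightarrow\nu(x)\sucs\nu(y)$ together with the transitivity of $\sucs$ (if $\Box A\in\Gamma$ and $\Gamma\sucs\Delta\sucs\Theta$, then $\Box A\in\Delta$ and hence $A,\Box A\in\Theta$), applied along an $R$-path witnessing $xR'y$.

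The heart of the matter, and the step I expect to be the main obstacle, is the two remaining adequacy clauses: the disjointness $A\neq B\Rightarrow\geone{A}{x}\cap\geone{B}{x}=\varnothing$ and the critical-cone condition $y\in\crone{A}{x}\Rightarrow\nu(x)\crit{A}\nu(y)$. The danger is that passing from $(R,S)$ to $(R',S')$ might enlarge the cones, thereby merging cones with distinct labels or dragging in badly-labelled worlds. I would neutralise this by proving that the cones do not change at all, i.e.\ that $\crone{A}{x}$ and $\geone{A}{x}$ computed in $F$ coincide with those computed in $G$. One inclusion (the $G$-cone contained in the $F$-cone) is trivial from $R\subseteq R'$ and $S\subseteq S'$. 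For the reverse inclusion I would show that the $G$-cone is already closed under the new steps: a single $R'$-step unfolds into finitely many $R$-steps, each keeping one inside the cone by its $R$-closure clause; and a single $S'$-step (with subscript $x$ for the critical cone, arbitrary $w$ for the generalized cone) unfolds, by definition of $S'$ as a transitive closure, into a finite chain of generators, where an $S$-generator is absorbed by the $S$-closure clause, a reflexive generator does nothing, and an inclusion generator $\langle y,z\rangle$ with $yR'z$ is absorbed by the $R$-closure clause. An induction on the length of these witnessing chains then shows the $G$-cone is closed under $R'$- and $S'$-steps, so it equals the $F$-cone. With the cones thus identified, disjointness and the critical-cone condition transfer verbatim from the corresponding quasi-frame clauses, completing the verification that $F$ is an adequate \il-frame.
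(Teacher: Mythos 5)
Your proof is correct, but it takes a different route from the paper's. The paper defines four kinds of \emph{imperfections} on a quasi-frame (failures of $R$-transitivity, of $xRy\to yS_xy$, of $S_x$-transitivity, and of $xRyRz\to yS_xz$), eliminates them one at a time along a chain of quasi-frames, and takes the union of that chain; you instead write down the least closure in one shot, as the transitive closure of $R$ followed by the transitive closure of each $S_x$ augmented with the forced reflexive and inclusion pairs. The two constructions yield the same frame (the minimal extension with no imperfections), and both hinge on the same key fact, which you isolate cleanly: the critical and generalized cones are unchanged by the closure, because every new $R'$- or $S'$-step unfolds into a finite chain of old steps that the $G$-cones already absorb via their own closure clauses. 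What the paper's iterative formulation buys is reusability: it is the template for the \ilm-closure (Lemma \ref{lemm:ilmextension}), where the \ilm frame condition $yS_xzRu\to yRu$ makes new $R$-edges depend on new $S$-edges and vice versa, so a single-pass closure is no longer obviously available and converse well-foundedness must be extracted from the hypothesis on $R^{\sf tr};S^{\sf tr}$. Your one-shot construction is cleaner and fully self-contained for \il itself, where $S$-edges never force new $R$-edges.
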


\begin{proof}
 We define an
\emph{imperfection} on a quasi-frame $F_n$ to be a tuple $\gamma$ having one of
the following forms.
\begin{itemize}
\item[$(\romannumeral 1)$]
$\gamma = \langle 0,a,b,c \rangle$ with 
$F_n \models aRbRc$ but $F_n \not \models aRc$

\item[$(\romannumeral 2)$]
$\gamma = \langle 1,a,b\rangle$ with $F_n\models aRb$ but  $F_n \not \models bS_ab$

\item[$(\romannumeral 3)$]
$\gamma = \langle 2,a,b,c,d \rangle$ 
with $F_n \models bS_acS_ad$ but not $F_n \models bS_ad$

\item[$(\romannumeral 4)$]
$\gamma = \langle 3,a,b,c\rangle$
with $F_n\models aRbRc$ but $F_n \not \models bS_ac$

\end{itemize}
Now let us start with a quasi-frame $G=\langle W,R,S,\nu \rangle$.
We will define a chain of quasi-frames. Every new element in the chain will
have at least one imperfection less than its predecessor. The union will have no
imperfections at all. It will be our required adequate \il-frame.
\end{proof}

\section{The Logic \ilm}
Let us first recall the principle $\sf M$, also called Montagna's 
principle.
\[
{\sf M}: \ \ \ A\rhd B \rightarrow A \wedge \Box C \rhd B \wedge \Box C
\]

The modal logic \ilm is of importance because it is the interpretability logic of theories like Peano Arithmetic.

\begin{theorem}[Berarducci \cite{bera:inte90}, Shavrukov \cite{shav:logi88}]\label{theo:shav}
If $T$ is an essentially reflexive theory, then $\intl{T}=\ilm$.
\end{theorem}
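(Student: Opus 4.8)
The plan is to prove the equality as two inclusions, since $\intl{T}$ is by definition the set of modal formulas all of whose arithmetical realizations $*$ (sending $\Box$ to the provability predicate of $T$ and $\rhd$ to the formalized interpretability relation over $T$) are provable in $T$. Thus I must show arithmetical soundness $\ilm \subseteq \intl{T}$ and arithmetical completeness $\intl{T} \subseteq \ilm$.

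For soundness I would check that under an arbitrary realization $*$ every axiom schema of \ilm becomes a $T$-provable sentence and that Necessitation and Modus Ponens preserve $T$-provability. The G\"odel--L\"ob fragment ${\sf L1}$--${\sf L3}$ is immediate from the Hilbert--Bernays--L\"ob derivability conditions for the provability predicate of $T$, and the interpretability schemata ${\sf J1}$--${\sf J5}$ follow from the elementary formalized properties of relative interpretations over $T$. The decisive point is the principle ${\sf M}$, and here I would invoke the Orey--H\'ajek--\v{S}vejdar characterization of interpretability over essentially reflexive theories: for sentences $A,B$ one has $T\vdash A\rhd B$ exactly when $T\vdash \bigwedge_n \Box\big(A\to \cons{B{\upharpoonright}n}\big)$, so that interpretability reduces to provable $\Pi_1$-conservativity. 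From this reduction the extra conjunct $\Box C$ in ${\sf M}$ can be absorbed, because essential reflexivity lets $T$ carry a proof of $C$ through the interpretation, yielding $A\wedge\Box C\rhd B\wedge\Box C$.

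For completeness I would argue contrapositively: assuming $\ilm\not\vdash\varphi$, I produce a realization $*$ with $T\not\vdash\varphi^*$. The first ingredient is modal completeness of \ilm with respect to finite ILM Veltman models, which is precisely what the construction method of Lemma \ref{lemm:main} delivers once a finite $\adset{D}$ closed under subformulas and single negations, together with a set of invariants encoding the frame condition for ${\sf M}$, is supplied; I therefore regard the existence of a finite countermodel $\langle W,R,S,\Vdash\rangle$ with a root $w\not\Vdash\varphi$ as available from the paper's machinery. The second and genuinely arithmetical ingredient is a Solovay-style embedding of this countermodel into the arithmetic of $T$: define a self-referential \emph{Solovay function} $\lambda$ tracking a point moving through the model, set $p^*:=\bigvee\{\,\lambda=x : x\Vdash p\,\}$, and prove the embedding lemma that for every subformula $\psi$ and node $x$, $T+(\lambda=x)$ proves $\psi^*$ iff $x\Vdash\psi$, with $\Box$ mirrored by $R$ and $\rhd$ mirrored by the Veltman $S$-relations. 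Specializing to the root then gives $T\not\vdash\varphi^*$, hence $\varphi\notin\intl{T}$.

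The main obstacle is this last arithmetical completeness step for the interpretability language. Unlike the Solovay construction for \gl, one must simulate not only the $R$-accessibility of the frame but also the whole family of relations $S_x$, so that a realized statement $A^*\rhd B^*$ holds over $T$ precisely when every $A$-world below $w$ can be $S_w$-sent to a $B$-world. Making the self-referential definition of $\lambda$ respect the $S$-structure, and verifying that the ${\sf M}$-condition built into the Veltman model matches the Orey--H\'ajek content used in the soundness direction, is exactly where the essential reflexivity of $T$ must be used a second time, and this is the technically demanding core of the Berarducci--Shavrukov theorem.
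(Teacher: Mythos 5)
The paper does not prove this theorem at all: it is imported as a black box from Berarducci and Shavrukov, and everything in the paper that depends on it (Theorem \ref{theo:ilmessig} in particular) simply cites it. So there is no in-paper proof to compare against; what can be judged is whether your outline would actually constitute a proof.

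Your decomposition into arithmetical soundness and arithmetical completeness is the right one, and the soundness half is essentially complete: the Orey--H\'ajek--\v{S}vejdar reduction of interpretability over essentially reflexive theories to provable $\Pi_1$-conservativity does validate ${\sf M}$, since $\Box C$ is $\Sigma_1$ and hence $\Box C \to \pi$ is $\Pi_1$ for $\Pi_1$ sentences $\pi$. The genuine gap is the completeness half, which you name but do not carry out. The Solovay-style embedding for the interpretability language is not a routine adaptation of the \gl case: one must arrange, self-referentially, that the arithmetical realization of $A \rhd B$ at a node $w$ is $T$-provably equivalent to the Veltman clause for $S_w$, and for essentially reflexive theories this requires threading the Orey--H\'ajek characterization (via definable cuts or restricted consistency statements for finite subtheories) through the definition of the Solovay function. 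This is where the entire difficulty of the Berarducci--Shavrukov result lives, and asserting that ``this is the technically demanding core'' does not discharge it. A smaller inaccuracy: you claim the paper's construction method supplies \emph{finite} countermodels, but the paper explicitly declines to build in the ``no deficiencies'' invariant needed for the finite model property, so its Main Lemma yields countable models; for the Solovay embedding you would need either the finite model property (available for \ilm, but from other sources such as de Jongh--Veltman) or an argument that the embedding works for a suitably presented infinite model.
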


The modal completeness of \ilm was proved by de Jongh and Veltman in
\cite{JoVe90}.
In this section we will reprove the modal completeness of the logic 
\ilm via the Main Lemma. This is done in 
\ref{subs:ILMpreparations}
and
\ref{subs:ilmcompleteness}.
In \ref{subs:admissibleRules} the new completeness proof is used to obtain some new results on admissible rules of \ilm.

The general approach to the new completeness proof of \ilm is not much different from the completeness 
proof for \il.
The novelty consists of incorporating the \ilm frame condition,
that is,
whenever 
$yS_xzRu$ holds, we should also have $yRu$. In this case, 
adequacy imposes $\nu (y) \sucs \nu (u)$.

Thus, whenever we introduce an $S_x$ relation, when eliminating 
a deficiency, we should keep in mind that in a later stage, this 
$S_x$ can activate the \ilm frame condition. It turns out to be
sufficient to demand $\nu (y) \boxin \nu (z)$ whenever $ySz$.
Also, we should do some additional book keeping as to keep our critical 
cones fit to our purposes.

\subsection{Preparations}\label{subs:ILMpreparations}

We start by defining a frame condition for \ilm.
\begin{definition}\label{defi:ilmframe}
An \ilm-frame is a frame such that 
$yS_xzRu \rightarrow yRu$ holds on it. A(n adequate) labeled 
\ilm-frame is a labeled \ilm-frame 
on which $yS_xz\rightarrow\nu(y)\boxin \nu(z)$ 
holds. We call $yS_xzRu \rightarrow yRu$ the frame condition of \ilm.
\end{definition}

The next lemma tells us that the frame condition of \ilm, indeed 
characterizes the frames of \ilm.

\begin{lemma}\label{lemm:frameilm}
$F \models \forall x,y,u,v \; (yS_xuRv \rightarrow yRv) 
\Leftrightarrow F \models \ilm$
\end{lemma}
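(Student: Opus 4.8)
The plan is to prove the biconditional
\[
F \models \forall x,y,u,v \; (yS_xuRv \rightarrow yRv)
\Leftrightarrow F \models \ilm
\]
by establishing the two directions separately, treating the left-to-right implication semantically (frame condition forces validity) and the right-to-left implication by the standard contrapositive Sahlqvist-style construction (failure of the frame condition yields a countermodel to $\sf M$).

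For the direction ``$\Rightarrow$'', I would assume the frame condition $yS_xuRv \rightarrow yRv$ holds on $F$ and show that every instance of $\sf M$, namely $A\rhd B \rightarrow A \wedge \Box C \rhd B \wedge \Box C$, is valid on $F$ under every valuation. So fix a world $w$ and a valuation, suppose $w \Vdash A \rhd B$, and let $u$ be arbitrary with $wRu$ and $u \Vdash A \wedge \Box C$. From $w \Vdash A \rhd B$ and $u \Vdash A$ I obtain a witness $v$ with $uS_wv$ and $v \Vdash B$. The goal is $v \Vdash B \wedge \Box C$, so it remains to check $v \Vdash \Box C$. Take any $v'$ with $vRv'$; then $u S_w v R v'$, so the frame condition gives $uRv'$, and since $u \Vdash \Box C$ we conclude $v' \Vdash C$. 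Hence $v \Vdash \Box C$, so $v \Vdash B \wedge \Box C$ witnesses $w \Vdash A \wedge \Box C \rhd B \wedge \Box C$. Thus $\sf M$ is valid, and since $\ilm = \extil{M}$ is sound with respect to frames validating $\sf M$, we get $F \models \ilm$.

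For the direction ``$\Leftarrow$'', I would argue by contraposition: assume the frame condition fails, so there are worlds with $yS_xuRv$ but not $yRv$, and build a valuation refuting an instance of $\sf M$ at $x$. The natural choice is to let $A$ be true exactly at $u$ (and its relevant position), let $B$ be true exactly at $y$, and let $C$ be a variable false at $v$ but true at all other $R$-successors of $u$ that could serve as witnesses. Concretely I would set a propositional variable $c$ to fail precisely at $v$ and hold everywhere else below $u$, arrange $A$ to hold at $u$ and $B$ to hold at $y$ (so that $x \Vdash A \rhd B$ via the witness $y$, using $yS_xu$... here I must be careful about which world sees which), and then show $x \not\Vdash A\wedge\Box C \rhd B\wedge\Box C$ because every $S_x$-witness for $A\wedge\Box C$ is forced through $y$, yet $y \not\Vdash \Box C$ since $yRv$ fails to hold precisely while $v \not\Vdash C$ — making the $\Box C$ conjunct unavoidably break. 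This refutes $\sf M$ at $x$, contradicting $F \models \ilm$.

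The main obstacle will be the right-to-left direction, specifically engineering the valuation so that $A \rhd B$ genuinely holds at $x$ while $A \wedge \Box C \rhd B \wedge \Box C$ fails, and doing so \emph{uniformly} regardless of the ambient $S$- and $R$-structure of $F$. The delicate point is that $\sf M$-failure must be arranged without accidentally creating an alternative $S_x$-witness for $B \wedge \Box C$ elsewhere in the frame; this requires defining $A$, $B$, and $C$ by truth-sets pinned to the specific offending worlds $u$, $y$, $v$, and carefully verifying the $\rhd$-clause quantifies correctly over \emph{all} $R$-successors of $x$ satisfying the antecedent. I would most likely cite the known Sahlqvist correspondence for $\sf M$ (as in de Jongh--Veltman \cite{JoVe90}) to discharge the bookkeeping, since the frame condition $yS_xuRv \to yRv$ is exactly the first-order correspondent of $\sf M$, and the excerpt's completeness machinery presupposes this correspondence is available.
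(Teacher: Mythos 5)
The paper itself gives no proof of this lemma --- it is stated as well known and used as such in the proof of Theorem \ref{theo:ilmcomplete} --- so your attempt can only be measured against the standard correspondence argument. Your ``$\Rightarrow$'' direction is correct and complete: given $w\Vdash A\rhd B$ and $wRu\Vdash A\wedge\Box C$ with witness $uS_wv\Vdash B$, the frame condition applied to $uS_wvRv'$ yields $uRv'$ and hence $v'\Vdash C$, exactly as needed.

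In the ``$\Leftarrow$'' direction your strategy (contraposition, truth-sets pinned to the offending worlds) is the right one, but the valuation as you describe it has the roles of $y$ and $u$ interchanged, and the sentence ``$y\not\Vdash\Box C$ since $yRv$ fails'' is backwards: the failure of $yRv$ is precisely what \emph{secures} $y\Vdash\Box C$. Given a violating configuration $yS_xu$, $uRv$, $\neg\, yRv$, note that the $S_x$-arrow runs \emph{from} $y$ \emph{to} $u$, so $y$ must carry the antecedent and $u$ the consequent: take fresh variables and set $p$ true exactly at $y$, $q$ true exactly at $u$, and $c$ true everywhere except at $v$. Then $x\Vdash p\rhd q$ (the unique $p$-world $y$ has the $S_x$-successor $u\Vdash q$); moreover $y\Vdash p\wedge\Box c$ \emph{because} $\neg\, yRv$, while the unique $q$-world $u$ fails $\Box c$ \emph{because} $uRv$ and $v\not\Vdash c$, so $y$ has no $S_x$-successor satisfying $q\wedge\Box c$ and $x\not\Vdash p\wedge\Box c\rhd q\wedge\Box c$. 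Your worry about accidental alternative witnesses evaporates once $q$ is a fresh variable true only at $u$: there simply is no other candidate witness. With this correction the argument is complete and elementary; the appeal to Sahlqvist correspondence or to \cite{JoVe90} is then unnecessary, though of course legitimate as a citation.
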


We will now introduce a notion of a quasi-\ilm-frame and a corresponding 
closure lemma.
In order to get an \ilm-closure lemma in analogy with Lemma \ref{lemm:extension}
we need to introduce a technicality.

\begin{definition}\label{defi:critmcone}
The $A$-critical $\mathcal{M}$-cone of $x$, we write $\mathcal{M}_x^A$,
is defined inductively as follows.
\begin{itemize}
\item
$xR^Ay \rightarrow y \in \mathcal{M}_x^A$
\item
$ y \in \mathcal{M}_x^A\ \& \  yRz \rightarrow z \in \mathcal{M}_x^A$
\item
$ y \in \mathcal{M}_x^A\ \& \  yS_xz \rightarrow z \in \mathcal{M}_x^A$
\item
$ y \in \mathcal{M}_x^A\ \& \  yS^{\sf tr} uRv \rightarrow v \in \mathcal{M}_x^A$
\end{itemize}
\end{definition}

\begin{definition}\label{defi:ilmquasiframe}
A quasi-frame is 
a quasi-\ilm-frame if\footnote{By $R^{\sf tr}$ we 
denote the transitive closure of $R$, inductively defined as
the smallest set such that $xRy \rightarrow xR^{\sf tr}y$ and
$\exists z \ (xR^{\sf tr}z \wedge zR^{\sf tr}y) \rightarrow x R^{\sf tr} y )$. 
Similarly we define $S^{\sf tr}$. The $;$ is the composition 
operator on relations. Thus, for example, $y (R^{\sf tr} ; S)z$ iff
there is a $u$ such that $yR^{\sf tr}u$ and $uSz$. Recall that $uSv$
iff $uS_xv$ for some $x$. In the literature one often also uses the 
$\circ$ notation, where $xR\circ Sy$ iff 
$\exists z \; xSzRy$. Note that $R^{\sf tr}; S^{\sf tr}$ is conversely well-founded
iff $R^{\sf tr}\circ S^{\sf tr}$ is conversely well-founded.} 
the following properties hold.
\begin{itemize}
\item
$R^{\sf tr}; S^{\sf tr}$ is conversely well-founded\footnote{In
the case of quasi-frames we did not need a second order frame condition.
We could use the second order frame condition of \il via
$yS_xz \rightarrow xRy \  \& \ xRz$. Such a trick seems not to be available 
here.}

\item
$yS_xz\rightarrow\nu(y)\boxin \nu(z)$

\item
$y \in  \mathcal{M}_x^A \Rightarrow \nu (x) \crit{A} \nu (y)$

\end{itemize}

\end{definition}
It is easy to see that 
$\crone{A}{x} \subseteq  \mathcal{M}_x^A \subseteq \geone{A}{x}$. Thus
we have that 
$A\neq B \rightarrow  \mathcal{M}_x^A \cap \mathcal{M}_x^B = \varnothing$.
Also, it is clear that if $F$ is an \ilm-frame, then 
$F\models  \mathcal{M}_x^A = \crone{A}{x}$.
Actually we have that a quasi-\ilm-frame $F$ is an \ilm-frame iff
$F\models  \mathcal{M}_x^A = \crone{A}{x}$.



\begin{lemma}[\ilm-closure]\label{lemm:ilmextension}
Let $G=\langle W,R,S,\nu \rangle$ be a quasi-\ilm-frame. There is an adequate 
\ilm-frame $F$ extending $G$. That is, $F=\langle W,R',S',\nu \rangle$
with $R\subseteq R'$ and $S\subseteq S'$.
\end{lemma}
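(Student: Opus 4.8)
The plan is to imitate the proof of the \il-closure lemma (Lemma~\ref{lemm:extension}): keep the universe $W$ fixed, add only $R$- and $S$-transitions, repair one \emph{imperfection} at a time, and take the union of the resulting bounded chain of quasi-\ilm-frames. I would keep the four imperfection types $(\romannumeral 1)$--$(\romannumeral 4)$ of that proof and add a fifth one, dictated by the frame condition of \ilm: $(\romannumeral 5)$ is a tuple $\langle 4,a,b,c,d\rangle$ with $F_n\models bS_acRd$ but $F_n\not\models bRd$. A quasi-\ilm-frame free of all five kinds of imperfection is precisely an adequate \ilm-frame, so it suffices to manufacture such a union. Imperfections of kind $(\romannumeral 1)$--$(\romannumeral 4)$ are repaired exactly as in the \il-case by inserting the missing $R$- or $S_a$-edge, and an imperfection of kind $(\romannumeral 5)$ is repaired by inserting the edge $bRd$; treating the imperfections in a fair enumeration guarantees that the union has none. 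The decisive feature, used throughout, is that \emph{every inserted edge is a shortcut along an already existing $R/S$-path}.

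First I would check that each repair preserves the labeling conditions of a quasi-\ilm-frame. For a new $R$-edge I need $\nu(\cdot)\sucs\nu(\cdot)$: for kind $(\romannumeral 1)$ this is transitivity of $\sucs$ along $aRbRc$, and for kind $(\romannumeral 5)$ it is the chain $\Box A\in\nu(b)\Rightarrow\Box A\in\nu(c)\Rightarrow A,\Box A\in\nu(d)$, where the first step uses $\nu(b)\boxin\nu(c)$ (available because $bS_ac$) and the second uses $\nu(c)\sucs\nu(d)$. For a new $S_a$-edge I need $\nu(\cdot)\boxin\nu(\cdot)$: this follows from the fact that $\nu(x)\sucs\nu(y)$ implies $\nu(x)\boxin\nu(y)$ for kinds $(\romannumeral 2)$ and $(\romannumeral 4)$, and from transitivity of $\boxin$ for kind $(\romannumeral 3)$. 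Hence the invariant $yS_xz\rightarrow\nu(y)\boxin\nu(z)$ and the adequacy condition $xRy\Rightarrow\nu(x)\sucs\nu(y)$ survive every step and the union.

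The heart of the matter is that the critical-cone bookkeeping is preserved \emph{automatically}, which is exactly what the $\mathcal{M}$-cone was introduced for. I would show that no repair enlarges any cone $\mathcal{M}_x^A$ or $\geone{A}{x}$: because both cones are already closed under their defining steps ($R$, $S_x$, and $yS^{\sf tr}uRv$ for $\mathcal{M}$; $R$ and arbitrary $S_w$ for $\geone{}{}$), and because every new edge is a shortcut along an existing $R/S$-path, the endpoint of each inserted edge already lies in precisely the same cones. For the \ilm-edge $bRd$, for instance, if $b\in\mathcal{M}_x^A$ then $bS^{\sf tr}cRd$ already forces $d\in\mathcal{M}_x^A$, and the remaining clauses are checked the same way. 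Since the cones do not grow, the conditions $y\in\mathcal{M}_x^A\Rightarrow\nu(x)\crit{A}\nu(y)$ and $A\neq B\Rightarrow\geone{A}{x}\cap\geone{B}{x}=\varnothing$ are inherited verbatim from $G$ at every stage. In the union, which has no imperfections and hence is an \ilm-frame, one has $\mathcal{M}_x^A=\crone{A}{x}$; combined with the preserved $\mathcal{M}$-labeling this delivers the last adequacy condition $y\in\crone{A}{x}\Rightarrow\nu(x)\crit{A}\nu(y)$.

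The main obstacle is \emph{convergence}: I must ensure that the cascade of repairs---where a fresh $R$-edge begets fresh $S$-edges via $aRbRc\rightarrow bS_ac$, which in turn beget fresh $R$-edges via the \ilm condition---does not generate an ill-founded $R'$, so that the bounded union is a genuine, conversely well-founded, bounded-depth \ilm-frame. In the \il-case this is automatic because $yS_xz\rightarrow xRy\ \&\ xRz$ confines all $S$-activity to $R$-cones; as the authors remark in the footnote to Definition~\ref{defi:ilmquasiframe}, that device is unavailable here, and this is exactly why the second-order hypothesis ``$R^{\sf tr};S^{\sf tr}$ is conversely well-founded'' is built into the quasi-\ilm-frame. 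I would exploit it as follows: since every inserted edge connects a node to one already reachable from it along an $R/S$-path (for the \ilm-edge, $bRd$ factors through $bS_acRd$, so $d$ is $R^{\sf tr}\circ S^{\sf tr}$-below $b$), the repairs create no new ascending chains in the combined $R/S$-reachability; the derived relation $R'$ is contained in this reachability, so converse well-foundedness of $R^{\sf tr};S^{\sf tr}$ passes to $R'$ and simultaneously yields the uniform depth bound needed to form the bounded union. Pinning down this well-foundedness transfer cleanly---rather than the routine cone and labeling checks---is where the real work lies.
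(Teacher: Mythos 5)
Your proposal is correct and follows essentially the same route as the paper: the same fifth imperfection type $\langle 4,a,b,c,d\rangle$ repaired by inserting $bRd$, the same observation that the $\mathcal{M}$-cones (unlike the critical cones) are invariant because every inserted edge is a shortcut along a path the cones are already closed under, and the same appeal to converse well-foundedness of $R^{\sf tr};S^{\sf tr}$ via factoring each new $R$-edge through $bS_acRd$ to handle convergence. The paper's proof is exactly this argument, carried out by an induction showing $F_n\models xRy$ iff $F_0\models x(S^{\sf tr,refl};R^{\sf tr})y$.
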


\begin{proof}
The proof is very similar to that of Lemma \ref{lemm:extension}. As a matter
of fact, we will use large parts of the latter proof in here.
For quasi-\ilm-frames we also define the notion of an imperfection. 
An 
\emph{imperfection} on a quasi-\ilm-frame $F_n$ is a tuple $\gamma$
that is either an imperfection on the quasi-frame $F_n$, or
it is a tuple of the form
\[
\gamma = \langle 4, a,b,c,d\rangle
\mbox{ with } F_n \models bS_acRd \mbox{ but }
F_n \not \models bRd .
\]
As in the closure proof for quasi-frames, 
we define a chain of quasi-\ilm-frames. Each new frame in the chain will
have at least one imperfection less than its predecessor. 
We only have to consider the new imperfections, in which case we define
\[
F_{n+1}\eqbydef \langle W_n , R_n \cup \{  \langle b,d \rangle \}, 
S_n  , \nu_n \rangle .
\]
We now see by an easy but elaborate induction that every $F_n$ is
a quasi-\ilm-frame. Again, this boils down to checking that at
each of $(\romannumeral 1)$-$(\romannumeral 5)$, all the eight 
properties from Definition \ref{defi:ilmquasiframe} are preserved.

During the closure process, the critical cones do change. However, the 
critical $\mathcal{M}$-cones are invariant. Thus, it is useful to prove
\[
8'.\ \ F_{n+1}\models y \in \mathcal{M}^A_x \mbox{ iff } F_n\models y \in \mathcal{M}^A_x .
\]
Our induction is completely straightforward. As an example we shall see that 
$8'$ holds in Case $(\romannumeral 1)$: We have eliminated an imperfection concerning the 
transitivity of the $R$ relation and 
$F_{n+1}\eqbydef \langle W_n , R_n \cup \{  \langle a,c \rangle \}, S_n , \nu_n \rangle$.

To see that $8'$ holds, we reason as follows.
Suppose $F_{n+1}\models y \in \mathcal{M}^A_x$. Thus 
$\exists z_1,\ldots,z_l$  $(0\leq l)$ with\footnote{The union 
operator on relations can just be seen as the set-theoretical union.
Thus, for example, $y (S_x \cup R) z$ iff $yS_xz$ or
$yRz$.}
$F_{n+1}\models xR^Az_1 (S_x \cup R \cup (S^{\sf tr} ; R)) z_2, \ldots,
z_l (S_x \cup R \cup (S^{\sf tr} ; R)) y$.
We transform the sequence $z_1,\ldots,z_l$ into a sequence
$u_1,\ldots , u_m$ ($0\leq m$) in the following way.
Every occurrence of $aRc$ in $z_1 , \ldots ,z_l$ is replaced by
$aRbRc$. In case that for some $n <l$ we have
$z_n S^{\sf tr}aRc=z_{n+1}$, we replace $z_n, z_{n+1}$ by
$z_n,b,c$ and thus 
$z_n (S^{\sf tr}; R)bRc$. We leave the rest of the 
sequence $z_1,\ldots , z_l$ unchanged. 
Clearly
$F_{n}\models xR^Au_1 (S_x \cup R \cup (S^{\sf tr} ; R)) u_2, \ldots,
u_m (S_x \cup R \cup (S^{\sf tr} ; R)) y$, whence 
$F_n\models y \in \mathcal{M}^A_x$.
\medskip

We shall include one more example for Case $(\romannumeral 5)$: 
We have eliminated an imperfection 
concerning the \ilm 
frame-condition and 
$F_{n+1}\eqbydef \langle W_n , R_n \cup \{  \langle b,d \rangle \}, 
S_n  , \nu_n \rangle$.
To see the
conversely well-foundedness of $R$, we reason as follows.
Suppose for a contradiction that there is an infinite sequence such that
$F_{n+1}\models x_1Rx_2R\ldots$. We now get an infinite sequence
$y_1,y_2,\ldots$ by replacing every occurrence of $bRd$ in 
$x_1,x_2,\ldots$ by $bS_acRd$ and leaving the rest unchanged.
If there are infinitely many $S_a$-transitions in the sequence
$y_1,y_2,\ldots$ (note that there are certainly infinitely many
$R$-transitions in $y_1,y_2,\ldots$), we get a contradiction 
with our assumption that $R^{\sf tr}; S^{\sf tr}$ is
conversely well-founded on $F_n$.
In the other case we get a contradiction with the conversely 
well-foundedness of $R$ on $F_n$.
\medskip

Once we have seen that indeed, every $F_n$ is a quasi-\ilm-frame, it is not hard to see 
that $F \eqbydef \cup_{i\in \omega}F_i$ is the required adequate \ilm-frame.
To this extend we have to check a list of properties $(a.)$-$(n.)$.
The properties $(a.)$-$(l.)$ are as in the proof of Lemma \ref{lemm:extension}.

The one exception is Property $(d.)$. To see $(d.)$, the 
conversely well-foundedness of $R$, we prove by induction
on $n$ that 
$F_n\models xRy$ iff $F_0 \models x (S^{\sf tr, refl};R^{\sf tr})y$.
Thus, a hypothetical infinite sequence
$F\models x_0 R x_1 R x_2 R \ldots$
defines an infinite sequence
$F_0\models x_0 (S^{\sf tr, refl};R^{\sf tr}) x_1 
(S^{\sf tr, refl};R^{\sf tr}) x_2 \ldots$, which contradicts either the
conversely well-foundedness of $R$ or of $S^{\sf tr};R^{\sf tr}$ on $F_0$.

The only new properties in this list 
are $(m.): \ uS_xvRw \rightarrow uRw$ and 
$(n.): \ yS_xz \rightarrow \nu (y) \boxin \nu (z)$, but
they are easily seen to hold on $F$.
\end{proof}

Again do we note that the closure obtained in Lemma \ref{lemm:ilmextension}
is unique. Thus we can refer to the \ilm-closure of a quasi-\ilm-frame.
All the information about the labels can be dropped in 
Definition \ref{defi:ilmquasiframe} and  Lemma \ref{lemm:ilmextension}
to obtain a lemma about regular \ilm-frames.

\begin{corollary}\label{coro:invariantilm}
Let \adset{D} be a finite set of sentences, closed under subformulas and single negations.
Let $G=\langle W,R,S,\nu \rangle$ be a quasi-\ilm-frame on which
\[
xRy \rightarrow \exists \,  A {\in}((\nu (y)\setminus \nu (x))\cap 
\{  \Box D \mid D \in \adset{D}\} ) \ \ \ (*)
\]
holds. Property $(*)$ does also hold on the \il-closure $F$ of $G$.
\end{corollary}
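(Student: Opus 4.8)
The plan is to induct along the chain of quasi-\ilm-frames $F_0 = G, F_1, F_2, \ldots$ produced by the closure construction of Lemma~\ref{lemm:ilmextension}, and then pass to the union $F = \cup_{i}F_i$. The crucial simplification is that this construction never alters the labeling: every $F_n$ carries the same $\nu$ as $G$, and only $R$ and $S$ grow. Consequently, a witness $\Box D \in \nu(y)\setminus\nu(x)$ for an $R$-edge $xRy$, once found in some $F_n$, remains a witness in $F$. Since $R' = \cup_i R_i$, every $R$-edge of $F$ already occurs in some $F_n$, so it suffices to show that $(*)$ is preserved at each step of the chain, the base case $F_0 = G$ being the hypothesis.

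For the inductive step I would first observe which imperfections actually create new $R$-edges. Of the five imperfection types, only Case~$(\romannumeral 1)$ (transitivity of $R$) and Case~$(\romannumeral 5)$ (the \ilm frame condition) add pairs to $R$; the types $(\romannumeral 2)$, $(\romannumeral 3)$, $(\romannumeral 4)$ add only $S$-transitions and therefore preserve $(*)$ trivially. Thus only the two $R$-adding cases require argument, and in each the point is to show that the box-witness supplied by the induction hypothesis for the already-present edge persists across the newly added edge.

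For Case~$(\romannumeral 1)$ we add $aRc$ where $F_n \models aRbRc$. Applying the induction hypothesis to $bRc$ yields some $\Box D \in \nu(c)\setminus\nu(b)$ with $D \in \adset{D}$. Since $aRb$ holds we have $\nu(a)\sucs\nu(b)$, so $\Box D \in \nu(a)$ would force $\Box D \in \nu(b)$, contradicting $\Box D \notin \nu(b)$. Hence $\Box D \in \nu(c)\setminus\nu(a)$ witnesses $(*)$ for $aRc$. For Case~$(\romannumeral 5)$ we add $bRd$ where $F_n \models bS_acRd$. Applying the induction hypothesis to $cRd$ yields some $\Box D \in \nu(d)\setminus\nu(c)$ with $D \in \adset{D}$. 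Since $bS_ac$ holds, the defining clause $yS_xz \rightarrow \nu(y)\boxin\nu(z)$ of a quasi-\ilm-frame gives $\nu(b)\boxin\nu(c)$, so $\Box D \in \nu(b)$ would force $\Box D \in \nu(c)$, contradicting $\Box D \notin \nu(c)$. Hence $\Box D \in \nu(d)\setminus\nu(b)$ witnesses $(*)$ for $bRd$.

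The main obstacle, and the only place where the \ilm-specific structure is used, is Case~$(\romannumeral 5)$: persistence of the box-witness across the edge $bRd$ is exactly what the Box-inclusion condition $yS_xz \rightarrow \nu(y)\boxin\nu(z)$ built into quasi-\ilm-frames is designed to guarantee, whereas Case~$(\romannumeral 1)$ only needs the successor relation $\sucs$ already available in every quasi-frame. Everything else is the routine bookkeeping of carrying the invariant $(*)$ through the bounded union.
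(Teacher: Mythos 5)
Your proof is correct and follows essentially the same route as the paper: the paper likewise handles Cases (i)--(iv) by appeal to the corresponding closure argument for plain quasi-frames (Corollary 5.3 of the first paper, where Case (i) is settled via $\sucs$ exactly as you do) and then singles out Case (v), using $\nu(b)\boxin\nu(c)$ to carry the box-witness from $cRd$ over to the new edge $bRd$. Your write-up merely makes explicit the chain-and-union bookkeeping and the observation that only the two $R$-adding imperfections need attention, which the paper leaves implicit.
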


\begin{proof}
The proof is as the proof of  
Corollary 5.3 from \cite{jogo:mm08}.
We only need to remark on Case $(\romannumeral 5)$: If $bS_acRd$, we have
$\nu (b) \boxin \nu (c)$. Thus, 
$A\in ((\nu (d) \setminus \nu (c)) \cap \{ \Box D \mid D \in \adset{D}\})$
implies $A \not \in \nu (b)$.
\end{proof}

The final lemma in our preparations is a lemma that is needed to 
eliminate deficiencies properly.

\begin{lemma}\label{lemm:ilmdeficiencies}
Let $\Gamma$ and $\Delta$ be maximal \ilm-consistent sets.
Consider $C \rhd D \in \Gamma \crit{B} \Delta \ni C$. There exists a 
maximal \ilm-consistent set $\Delta'$
with $\Gamma \crit{B} \Delta'\ni D, \Box \neg D$ and $\Delta \boxin \Delta'$. 
\end{lemma}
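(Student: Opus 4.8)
The plan is to obtain $\Delta'$ as any maximal \ilm-consistent extension (Lindenbaum) of
\[
T \eqbydef \{D,\Box\neg D\}\cup\{\neg E,\Box\neg E\mid E\rhd B\in\Gamma\}\cup\{\Box F\mid \Box F\in\Delta\},
\]
whose three blocks are tailored to force $D,\Box\neg D\in\Delta'$, the relation $\Gamma\crit{B}\Delta'$, and $\Delta\boxin\Delta'$ respectively. So the whole lemma reduces to the \ilm-consistency of $T$. As $\vdash_{\ilm}$ is finitary it suffices to check an arbitrary finite subset; collecting the formulas it mentions and putting $E\eqbydef\bigvee_iE_i$ and $F\eqbydef\bigwedge_jF_j$, repeated \principle{J3} gives $E\rhd B\in\Gamma$ while $\Box F\in\Delta$ (an \mcs is closed under conjunction of $\Box$-formulas). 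Hence it is enough to place $D,\Box\neg D,\neg E,\Box\neg E,\Box F$ together inside one \mcs.

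The heart of the matter is the claim $\neg(A\rhd B)\in\Gamma$ for $A\eqbydef D\wedge\Box\neg D\wedge\Box F$. I would argue by contradiction from $A\rhd B\in\Gamma$, using two ingredients: Montagna's principle \principle{M}, and the \il-theorem $D\rhd(D\wedge\Box\neg D)$ (derivable from \principle{J1}, \principle{J3}, \principle{J5} and L\"ob's axiom \principle{L3}). Feeding \principle{M} the hypothesis $C\rhd D\in\Gamma$ with box-formula $\Box F$ yields $C\wedge\Box F\rhd D\wedge\Box F\in\Gamma$, and feeding it the theorem $D\rhd(D\wedge\Box\neg D)$ yields $\vdash_{\ilm}D\wedge\Box F\rhd A$. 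Chaining these by \principle{J2}, and then once more with the assumption $A\rhd B\in\Gamma$, gives $C\wedge\Box F\rhd B\in\Gamma$. Now $\Gamma\crit{B}\Delta$ forces $\neg(C\wedge\Box F)\in\Delta$, contradicting $C\in\Delta$ and $\Box F\in\Delta$; this establishes the claim.

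Given $\neg(A\rhd B)\in\Gamma$, Lemma \ref{lemm:problems} supplies an \mcs $\Delta''$ with $\Gamma\crit{B}\Delta''\ni A$. The conjuncts of $A$ put $D,\Box\neg D,\Box F$ into $\Delta''$, while $\Gamma\crit{B}\Delta''$ together with $E\rhd B\in\Gamma$ puts $\neg E,\Box\neg E$ there; hence $\Delta''$ contains the whole finite subset (each $\neg E_i,\Box\neg E_i$ follows from $\neg E,\Box\neg E$, each $\Box F_j$ from $\Box F$). So every finite subset of $T$ is consistent, $T$ is consistent, and any \mcs extending it is the required $\Delta'$.

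The only genuinely new feature compared with the plain deficiency Lemma \ref{lemm:deficiencies} is the third block of $T$, which secures $\Delta\boxin\Delta'$, and the crux is transporting those $\Box F$'s across the $\rhd$-modality. This is exactly what \principle{M} is for: the \il-only argument behind Lemma \ref{lemm:deficiencies} has no way to carry $\Box F$ through $\rhd$, so without Montagna's principle the step $C\wedge\Box F\rhd B$ --- and with it the contradiction --- is unavailable. I therefore expect the two \principle{M}-applications, and the separate bookkeeping check that the \il-theorem $D\rhd(D\wedge\Box\neg D)$ holds, to be the only delicate points; the rest is routine \mcs manipulation.
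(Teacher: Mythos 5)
Your proof is correct, and it takes a genuinely (if mildly) different route through the middle of the argument. Both you and the paper make the same two key moves at the ends: the compactness reduction to a single $\Box F\in\Delta$ together with the $\crit{B}$-forcing formulas, and the use of Montagna's principle to transport $\Box F$ across $\rhd$ --- which, as you rightly observe, is the step that the pure \il argument of Lemma \ref{lemm:deficiencies} cannot supply. Where you diverge is in how you then land in a suitable \mcs: the paper applies \principle{M} once to get $C\wedge\Box F\rhd D\wedge\Box F\in\Gamma$, feeds this directly into the plain deficiency Lemma \ref{lemm:deficiencies} (yielding a $\Delta''\ni D\wedge\Box F\wedge\Box(\neg D\vee\neg\Box F)$), and finishes with the small derived implication $\ilm\vdash\Box F\wedge\Box(\neg D\vee\neg\Box F)\rightarrow\Box\neg D$ to recover $\Box\neg D$. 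You instead apply \principle{M} a second time to the \il-theorem $D\rhd(D\wedge\Box\neg D)$, chain with {\sf J2}, derive $\neg(A\rhd B)\in\Gamma$ for $A=D\wedge\Box\neg D\wedge\Box F$ by contradiction against $\Gamma\crit{B}\Delta$, and then invoke the problem Lemma \ref{lemm:problems}. Your version costs an extra application of \principle{M}, the verification of $D\rhd(D\wedge\Box\neg D)$, and a detour through a reductio, but it has the virtue of making completely explicit both the set $T$ whose consistency is at stake and the reason a hypothetical $A\rhd B\in\Gamma$ is incompatible with $\Delta$; the paper's version is more economical by reusing Lemma \ref{lemm:deficiencies} as a black box. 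Both are sound, and the delicate points you flag are exactly the right ones.
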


\begin{proof}
By compactness and by commutation of boxes and conjunctions, 
it is sufficient to show that for any 
formula $\Box E \in \Delta$ there is a $\Delta''$ with 
$\Gamma \crit{B} \Delta'' \ni D  \wedge \Box E \wedge \Box \neg D$. As 
$C\rhd D$ is in the maximal \ilm-consistent set $\Gamma$, also 
$C \wedge \Box E \rhd D \wedge \Box E \in \Gamma$.
Clearly $C \wedge \Box E \in \Delta$, whence, by Lemma
\ref{lemm:deficiencies} we find a $\Delta''$ with 
$\Gamma \crit{B} \Delta'' \ni D\wedge \Box E \wedge \Box (\neg D \vee \neg \Box E)$.
As $\ilm \vdash \Box E \wedge \Box (\neg D \vee \neg \Box E) \rightarrow \Box \neg D$,
we see that also $D \wedge \Box E \wedge \Box \neg D \in \Delta''$.
\end{proof}

\subsection{Completeness}\label{subs:ilmcompleteness}

\begin{theorem}\label{theo:ilmcomplete}
\ilm is a complete logic.
\end{theorem}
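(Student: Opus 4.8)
The plan is to instantiate the completeness clause of the Main Lemma (Lemma~\ref{lemm:main}) for $\extil{X}=\ilm$. As frame condition $\mathcal{C}$ I take the \ilm frame condition $yS_xzRu\rightarrow yRu$; by Lemma~\ref{lemm:frameilm} this condition characterizes the \ilm-frames, so in particular $F\models\mathcal{C}$ implies $F\models{\sf M}$, which is what the Main Lemma demands of $\mathcal{C}$. Now fix an arbitrary finite $\adset{D}$ closed under subformulas and single negations, and take as the invariant set $\mathcal{I}$ the three quasi-\ilm-frame conditions of Definition~\ref{defi:ilmquasiframe} --- conversely well-foundedness of $R^{\sf tr};S^{\sf tr}$, the labelling condition $yS_xz\rightarrow\nu(y)\boxin\nu(z)$, and $y\in\mathcal{M}^A_x\Rightarrow\nu(x)\crit{A}\nu(y)$ --- together with the ``new box'' invariant $(*)$ of Corollary~\ref{coro:invariantilm}, namely $xRy\rightarrow\exists\,A\in(\nu(y)\setminus\nu(x))\cap\{\Box D\mid D\in\adset{D}\}$.

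Next I would discharge the three bulleted requirements. For the clause $F\models\mathcal{I}^{\mathcal{U}}\Rightarrow F\models\mathcal{C}$, the point is that a bounded union of quasi-\ilm-frames is again a quasi-\ilm-frame (the labelling conditions are union-stable, and bounded depth keeps $R^{\sf tr};S^{\sf tr}$ conversely well-founded), and its \ilm-closure, furnished by Lemma~\ref{lemm:ilmextension}, is a genuine adequate \ilm-frame, hence validates $\mathcal{C}$ by Lemma~\ref{lemm:frameilm}. The truth-lemma transfers to this closure because the critical $\mathcal{M}$-cones are invariant under closure and collapse to the ordinary critical cones $\crone{A}{x}$ on an \ilm-frame, while $(*)$ itself survives the closure by Corollary~\ref{coro:invariantilm}. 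The second requirement is immediate: since $\adset{D}$ is closed under subformulas and single negations, the sets $\{\Box\neg D\mid D\text{ a subformula of some }B\in\adset{D}\}$ and $\{\Box D\mid D\in\adset{D}\}$ coincide, so $(*)$ is precisely the demanded invariant. For the third, \adset{D}-problems are removed exactly as in the \il case using Lemma~\ref{lemm:problems}, whereas \adset{D}-deficiencies are removed by Lemma~\ref{lemm:ilmdeficiencies}, which delivers a witness $\Delta'$ with $\Gamma\crit{B}\Delta'\ni D,\Box\neg D$ \emph{and} the extra clause $\Delta\boxin\Delta'$; this last clause is exactly what conserves the invariant $yS_xz\rightarrow\nu(y)\boxin\nu(z)$ when the new $S_x$-edge is introduced. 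Finally, on any one-point labelled frame all of $\mathcal{I}$ holds vacuously, as there are no $R$- or $S$-transitions, so the completeness clause of Lemma~\ref{lemm:main} applies and yields that \ilm is complete.

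The hard part, and the reason for all the preparation in Section~\ref{subs:ILMpreparations}, is the interaction between the \ilm frame condition and the truth-lemma. Repairing a deficiency forces a new $S_x$-transition, and the condition $yS_xzRu\rightarrow yRu$ may later demand $R$-edges that were absent when the deficiency was repaired; such edges threaten both conversely well-foundedness and the critical-cone constraints underpinning the truth-lemma. The resolution rests on two earlier facts. First, the $\boxin$ bookkeeping of Lemma~\ref{lemm:ilmdeficiencies} guarantees $\nu(y)\boxin\nu(z)$ along every $S$-step, so that $(*)$ propagates across the $R$-edges added during closure (Corollary~\ref{coro:invariantilm}), keeping $R$ conversely well-founded. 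Second, the invariance of the critical $\mathcal{M}$-cones under closure ensures that a configuration free of problems relative to the $\mathcal{M}$-cones of the quasi-frame remains problem-free relative to the genuine critical cones of its closure. Verifying these two facts is where the real labour lies; once they are secured, the abstract machinery of the Main Lemma delivers completeness of \ilm.
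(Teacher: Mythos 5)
Your overall strategy---instantiating the Main Lemma with the \ilm frame condition as $\mathcal{C}$ and using Lemmas \ref{lemm:problems}, \ref{lemm:ilmdeficiencies} and \ref{lemm:ilmextension} in the elimination steps---is the right one, but your choice of invariants opens a genuine gap. The Main Lemma works with \emph{adequate labeled frames}: each elimination step must return an adequate labeled frame satisfying all invariants, and the final frame $\hat F$ on which the truth-lemma holds is the \emph{union} of the resulting bounded chain. The clause $F\models\mathcal{I}^{\mathcal{U}}\Rightarrow F\models\mathcal{C}$ must therefore be met by that union itself. By taking as invariants only the quasi-\ilm-frame conditions of Definition \ref{defi:ilmquasiframe}, you obtain a union that is merely a quasi-\ilm-frame, and you are forced to apply the closure Lemma \ref{lemm:ilmextension} \emph{after} forming the union. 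That final closure adds new $R$- and $S$-transitions to a model whose truth values are already fixed; new $R$-edges can create fresh deficiencies and new $S$-edges can destroy the witnesses for negated $\rhd$-statements, so the truth-lemma does not ``transfer to the closure''---the invariance of the $\mathcal{M}$-cones is a statement about the labeling, not about $\Vdash$, and it cannot carry this weight. The intended instantiation simply puts the frame condition $uS_xvRw\rightarrow uRw$ itself into $\mathcal{I}$ alongside the box-invariant $(*)$: this condition is trivially preserved under bounded unions, so the union is already an \ilm-frame and validates $\mathcal{C}$ outright, and it also guarantees that every frame in the chain is a genuine labeled \ilm-frame---a fact used repeatedly when checking that problem and deficiency elimination preserve adequacy (e.g.\ the step ``$b\in\mathcal{M}^A_a$, hence $b\in\crone{A}{a}$ since $F$ is an \ilm-frame''). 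The quasi-\ilm-frame apparatus and the closure lemma belong \emph{inside} each elimination step, where the one-world extension $G'$ is turned back into an adequate labeled \ilm-frame before the next step; they are not global invariants.

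A smaller inaccuracy: the sets $\{\Box\neg D\mid D\text{ a subformula of some }B\in\adset{D}\}$ and $\{\Box D\mid D\in\adset{D}\}$ do not coincide (not every element of $\adset{D}$ is a negation), although the former is contained in the latter under the stated closure assumptions; this containment is all that is needed for the invariant to do its job of bounding $R$-chains, so it does not affect the argument.
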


\begin{proof}
{\bf Frame Condition} In the case of \ilm the frame condition is 
easy and well known, as
expressed in Lemma \ref{lemm:frameilm}.

\medskip
{\bf Invariants}
Let \adset{D} be a finite set of sentences closed under subformulas and 
single negations. We define a corresponding set of invariants.
\[
\mathcal{I} \eqbydef \left \{ \begin{array}{l} xRy 
\rightarrow \exists \,  A {\in}((\nu (y)\setminus \nu (x))\cap 
\{  \Box D \mid D \in \adset{D}\} ) \\ 
uS_xvRw\rightarrow uRw
\end{array}\right.
\]
\medskip

{\bf Elimination} Thus, we consider an \ilm-labeled frame 
$F \eqbydef \langle W , R, S, \nu \rangle$ that satisfies the invariants.

\medskip
{\bf Problems} Any problem $\langle a, \neg (A \rhd B)\rangle$ of $F$ will
be eliminated in two steps.

\begin{enumerate}
\item
Using Lemma \ref{lemm:problems} we can find a \mcs $\Delta$ with 
$\nu (a) \crit{B} \Delta \ni A, \Box \neg A$. We fix some $b\notin W$
and define
\[
G':= \langle W\cup \{  b\} , R \cup \{ \langle a, b\rangle \}, S, \nu \cup
\{ \langle  b, \Delta \rangle , \langle \langle a,b \rangle, B \rangle \}\rangle.
\]
We now see that $G'$ is a quasi-\ilm-frame. Thus, we need to check the eight points
from Definitions \ref{defi:ilmquasiframe} and \ref{defi:quasiframes}.
We will comment on some of these points.

To see, for example, Point 4,
$C\neq D \rightarrow \geone{C}{x} \cap \geone{D}{x} = \varnothing$, we reason as
follows. 
First, we notice that 
$\forall \,  x,y {\in} W \ [G'\models y \in \geone{C}{x} \mbox{ iff } 
F\models y \in \geone{C}{x}]$ holds for any $C$.
Suppose $G' \models \geone{C}{x} \cap \geone{D}{x} \neq \varnothing$.
If $G' \models b \notin \geone{C}{x} \cap \geone{D}{x}$, then also
$F\models \geone{C}{x} \cap \geone{D}{x} \neq \varnothing$. As $F$ is
an \ilm-frame, it is certainly a quasi-\ilm-frame, whence $C=D$.
If now $G' \models b \in \geone{C}{x} \cap \geone{D}{x}$, necessarily
$G' \models a \in \geone{C}{x} \cap \geone{D}{x}$, whence
$F \models a \in \geone{C}{x} \cap \geone{D}{x}$ and $C=D$.

To see Requirement 8, 
$y\in \mathcal{M}^E_x \rightarrow \nu (x) \crit{E} \nu (y)$, we 
reason as follows. 
Again, we first note that 
$\forall \,  x,y {\in} W \ [G'\models y \in \mathcal{M}^C_x \mbox{ iff } 
F\models y \in \mathcal{M}^C_x]$ holds for any $C$.
We only need to consider the new element, that is,
$b\in \mathcal{M}^E_x$.
If $x=a$ and $E=B$, we get the property by choice of $\nu (b)$.

For $x\neq a$, we consider two cases. Either $a\in \mathcal{M}^E_x$
or $a\notin \mathcal{M}^E_x$. In the first case, we get by the 
fact that $F$ is a labeled \ilm-frame $\nu (x) \crit{E} \nu (a)$.
But $\nu (a) \sucs \nu (b)$, whence $\nu (x) \crit{E} \nu (b)$.
In the second necessarily for some $a'\in \mathcal{M}^E_x$
we have $a'S^{\sf tr}a$. But now $\nu (a') \boxin \nu (a)$. Clearly 
$\nu (x) \crit{E} \nu (a') \boxin \nu (a) \sucs \nu (b) \rightarrow 
\nu (x) \crit{E} \nu (b)$.

\item
With Lemma \ref{lemm:ilmextension} we extend $G'$ to an adequate labeled \ilm-frame
$G$. It is now obvious that both of the invariants hold on $G$. 
The first one holds due to Corollary \ref{coro:invariantilm}. The other
is just included in the definition of \ilm-frames.
Obviously, $\langle a, \neg (A \rhd B ) \rangle$ is not a problem any more in 
$G$.

\end{enumerate}
\medskip

{\bf Deficiencies}. Again, any deficiency $\langle a,b, C \rhd D\rangle$ in 
$F$ will be eliminated in two steps.
\begin{enumerate}
\item
We first define $B$ to be the formula such that $b\in \crone{B}{a}$. If such a
$B$ does not exist, we take $B$ to be $\bot$. Note that if such a $B$ does exist,
it must be unique by Property $4$ of Definition \ref{defi:quasiframes}.
By Lemma \ref{lemm:botcrit}, or just by the fact that $F$ is an \ilm-frame,
we have that $\nu (a) \crit{B} \nu (b)$.

By Lemma \ref{lemm:ilmdeficiencies} we can now find a 
$\Delta'$ such that
$\nu (a) \crit{B} \Delta' \ni D, \Box \neg D$ and 
$\nu (b) \boxin \Delta'$. We fix some $c \not \in W$
and define
\[
G' \eqbydef \langle W, R \cup \{ \langle a,c \rangle \}, 
S \cup \{ \langle a,b,c \rangle \}, \nu \cup \{  \langle c, \Delta' \rangle \} \rangle .
\]
To see that $G'$ is indeed a quasi-\ilm-frame, again eight properties 
should be checked. But all of these are fairly routine.

For Property 4 it is good to remark that, if $c\in \geone{A}{x}$, then
necessarily $b\in \geone{A}{x}$ or
$a \in \geone{A}{x}$.

To see Property 8, we reason as follows. We only need to consider
$c\in \mathcal{M}^A_x$. This is possible 
if $x=a$ and
$b\in \mathcal{M}^A_a$, or if for some $y \in \mathcal{M}^A_x$ we have
$yS^{\sf tr}a$, or if $a\in \mathcal{M}^A_x$. 
In the first case, we get that $b\in \mathcal{M}^A_a$,
and thus also $b\in \crone{A}{a}$ as $F$ is an \ilm-frame. Thus, by 
Property 4, we see that $A=B$. But $\Delta'$ was chosen such that
$\nu (a)\crit{B} \Delta'$. In the second case we see that 
$\nu (x) \crit{A}\nu (y) \boxin \nu (a) \sucs \nu (c)$ whence 
$\nu (x) \crit{A} \nu (c)$.
In the third case we have 
$\nu (x) \crit{A} \nu (a) \sucs \nu (c)$, whence
$\nu (x) \crit{A} \nu (c)$.

\item
Again, $G'$ is closed off under the frame conditions with Lemma
\ref{lemm:ilmextension}.
Clearly, $\langle a,b, C\rhd D\rangle$ is not a deficiency on $G$.
\end{enumerate}
\medskip

{\bf Rounding up}
One of our invariants is just the \ilm frame condition. Clearly this
invariant is preserved under taking unions of bounded chains. The closure
satisfies the invariants.
\end{proof}

\subsection{Admissible rules}\label{subs:admissibleRules}

With the completeness at hand, a lot of reasoning about \ilm 
gets easier. This holds in 
particular for derived/admissible rules of \ilm.
In the following lemma, we will use the completeness theorem to obtain models. Most of the times these models will be glued above a fresh new world to obtain new models with the desired properties.

\begin{lemma}\label{lemm:derivablerules}
\ \ 
\begin{itemize}
\item[$(\romannumeral 1)$]
$\ilm \vdash \Box A \Leftrightarrow \ilm \vdash A$

\item[$(\romannumeral 2)$]
$\ilm \vdash \Box A \vee \Box B \Leftrightarrow \ilm \vdash \Box A \mbox{ or }
\ilm \vdash \Box B$

\item[$(\romannumeral 3)$]
$\ilm \vdash  A\rhd B \Leftrightarrow \ilm \vdash A \rightarrow B \vee \Diamond B$.

\item[$(\romannumeral 4)$]
$\ilm \vdash A \rhd B \Leftrightarrow \ilm \vdash \Diamond A \rightarrow
\Diamond B$

%

\item[$(\romannumeral 5)$]
Let $A_i$ be formulae such that 
$\ilm \not \vdash \neg A_i$. Then \\
$
\ilm \vdash \bigwedge \Diamond A_i \rightarrow A\rhd B  
\Leftrightarrow \ilm \vdash A\rhd B.
$
\item[$(\romannumeral 6)$]
$\ilm \vdash A \vee \Diamond A \Leftrightarrow \ilm \vdash \Box 
\bot \rightarrow A$

\item[$(\romannumeral 7)$]
$\ilm \vdash \top \rhd A \Leftrightarrow \ilm \vdash \Box 
\bot \rightarrow A$

\end{itemize}
\end{lemma}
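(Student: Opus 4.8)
The plan is to lean on the completeness theorem (Theorem~\ref{theo:ilmcomplete}) throughout: for the nontrivial direction of each equivalence I prove the contrapositive by exhibiting a single \ilm-model refuting the relevant formula, and I build that model by \emph{gluing} one or more generated submodels below a fresh root. Concretely, given \ilm-models with distinguished worlds $w_0,\ldots,w_k$, I form the disjoint union of the generated submodels $M_j{\upharpoonright}w_j$, add a new world $r$, put $rRx$ for every world $x$ in the union, and on each component set $xS_ry$ iff $x=y$ or $xRy$. The first task is to verify this is a genuine \ilm-model: converse well-foundedness is preserved since $r$ is a new minimum; for the index $r$ the \il-conditions and the frame condition $yS_rzRu\to yRu$ hold because $\mathsf{id}\cup R$ is transitive and $R$ is transitive on each component; and the conditions for old indices are untouched since $r$ lies below everything. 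The crucial feature of this $S_r$ is that it \emph{respects components}: an $S_r$-successor of $w_j$ always lies inside $M_j{\upharpoonright}w_j$. Truth of subformulas inside each component is unchanged by Lemma~\ref{lemm:gesulem}.

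The easy halves are immediate: for $(\romannumeral 1)$, $\Leftarrow$ is Necessitation; for $(\romannumeral 2)$ and $(\romannumeral 5)$, $\Leftarrow$ is weakening; for $(\romannumeral 4)$, $\Rightarrow$ is axiom ${\sf J4}$. For the hard half of $(\romannumeral 1)$, if $\ilm\not\vdash A$ pick $M,w$ with $w\not\Vdash A$, glue $M{\upharpoonright}w$ below a root $r$, and read off $r\not\Vdash\Box A$. Part $(\romannumeral 2)$ is the same with two components: by $(\romannumeral 1)$ the hypotheses give $\ilm\not\vdash A$ and $\ilm\not\vdash B$, so pick $w_0\not\Vdash A$ and $w_1\not\Vdash B$, glue both below $r$, and conclude $r\not\Vdash\Box A\vee\Box B$; here there is no $S_r$ subtlety since only $R$ matters.

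The hub of the remaining parts is $(\romannumeral 3)$. Its $\Leftarrow$ direction is purely syntactic in \il: from $\vdash B\rhd B$ (apply ${\sf J1}$ to $\Box(B\to B)$) and $\vdash\Diamond B\rhd B$ (axiom ${\sf J5}$) one gets $\vdash(B\vee\Diamond B)\rhd B$ by ${\sf J3}$; then Necessitation and ${\sf J1}$ give $A\rhd(B\vee\Diamond B)$ from $A\to B\vee\Diamond B$, and ${\sf J2}$ yields $A\rhd B$. For $\Rightarrow$, assume $\ilm\not\vdash A\to B\vee\Diamond B$ and take $M,w$ with $w\Vdash A\wedge\neg B\wedge\Box\neg B$; then no world of $M{\upharpoonright}w$ satisfies $B$, so after gluing below $r$ every $S_r$-successor of $w$ falsifies $B$ while $w\Vdash A$, whence $A\rhd B$ fails at $r$. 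Part $(\romannumeral 4)$'s remaining direction reduces via $(\romannumeral 3)$ to establishing $\ilm\vdash A\to B\vee\Diamond B$: on the very same model one has $r\Vdash\Diamond A$ but $r\not\Vdash\Diamond B$, so $r\not\Vdash\Diamond A\to\Diamond B$, contradicting the hypothesis.

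Part $(\romannumeral 5)$ is where the component-respecting $S_r$ does real work, and I expect it to be the main obstacle. Assuming $\ilm\not\vdash A\rhd B$, part $(\romannumeral 3)$ supplies $M_0,w_0$ with a $B$-free generated submodel and $w_0\Vdash A$; since each $\ilm\not\vdash\neg A_i$, completeness supplies witnesses $M_i,w_i\Vdash A_i$. Gluing all of these below one root $r$ gives $r\Vdash\bigwedge\Diamond A_i$ from the witnesses, while $A\rhd B$ still fails at $r$; the one point to secure is that the $S_r$-successors of $w_0$ cannot wander into the components $M_i$ (which may contain $B$-worlds), and this is exactly the component-respecting property. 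Finally $(\romannumeral 6)$ is handled by soundness and converse well-foundedness: for $\Rightarrow$ evaluate $A\vee\Diamond A$ at a terminal world, and for $\Leftarrow$, under $w\Vdash\Box\neg A$ locate an $R$-maximal, hence terminal, element of $\{w\}\cup w{\upharpoonright}$ which must satisfy both $A$ (from $\ilm\vdash\Box\bot\to A$) and $\neg A$. Part $(\romannumeral 7)$ then drops out by instantiating $(\romannumeral 3)$ at $\top\rhd A$ to get $\ilm\vdash\top\rhd A\Leftrightarrow\ilm\vdash A\vee\Diamond A$ and combining with $(\romannumeral 6)$.
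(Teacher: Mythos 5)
Your proof is correct and follows essentially the same route as the paper: completeness plus gluing generated submodels below a fresh root $r$ with $S_r$ taken to be identity-union-$R$ on each component, and the generated submodel lemma to preserve truth. The minor reorganizations (proving $(\romannumeral 1)$ contrapositively, routing $(\romannumeral 4)$ and $(\romannumeral 5)$ through the countermodel supplied by $(\romannumeral 3)$, and arguing $(\romannumeral 6)$ semantically via terminal worlds) are harmless, and your explicit isolation of the component-respecting property of $S_r$ in $(\romannumeral 5)$ makes precise a point the paper leaves implicit.
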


\begin{proof}
$(\romannumeral 1)$. $\ilm \vdash A \Rightarrow \ilm \vdash \Box A$ 
by necessitation. Now suppose $\ilm \vdash \Box A$. We want to
see $\ilm \vdash A$. Thus, we take an arbitrary model
$M=\langle W,R,S,\Vdash \rangle$ and world $m\in M$. If there is an 
$m_0$ with $M\models m_0R m$, then $M,m_0 \Vdash \Box A$, whence
$M,m \Vdash A$. If there is no such $m_0$, we define (we may assume $m_0 \notin W$)
\[
\begin{array}{ll}
M'\eqbydef &\langle
W \cup \{ m_0 \}, 
R \cup \{  \langle m_0,w \rangle \mid w\in W\}, \\
\ & \ 
S \cup \{ \langle m_0 , x,y \rangle \mid \langle x,y \rangle \in R 
\mbox{ or } x{=}y\in W\},
\Vdash \rangle.
\end{array}
\]
Clearly, $M'$ is an \ilm-model too (the \ilm frame conditions
in the new cases follows from the transitivity of $R$), 
whence $M',m_0\Vdash \Box A$ and
thus $M',m \Vdash A$. By the construction of $M'$ and by Lemma \ref{lemm:gesulem}
we also get $M,m\Vdash A$.

$(\romannumeral 2)$.''$\Leftarrow$'' is easy. For the other
direction we assume $\ilm \not \vdash \Box A$ and $\ilm \not \vdash \Box B$
and set out to prove $\ilm \not \vdash \Box A \vee \Box B$.
By our assumption and by completeness, we find 
$M_0,m_0 \Vdash \Diamond \neg A$ and 
$M_1,m_1 \Vdash \Diamond \neg B$. We define (for some $r\notin W_0 \cup W_1$)
\[
\begin{array}{ll}
M\eqbydef & 
\langle W_0 \cup W_1 \cup \{ r\}, 
R_0\cup R_1\cup \{ \langle r,x \rangle \mid x \in W_0 \cup W_1  \},\\
\ & \ 
S_0\cup S_1\cup \{  \langle r,x,y \rangle \mid x{=}y {\in} 
W_0 \cup W_1 \mbox{ or }
\langle x,y \rangle {\in} R_0  \mbox{ or } \langle x,y \rangle {\in} R_1\},
\Vdash \rangle.
\end{array}
\]
Now, $M$ is an \ilm-model and 
$M,r \Vdash \Diamond \neg A \wedge \Diamond \neg B$ as is
easily seen by Lemma \ref{lemm:gesulem}. By soundness we get 
$\ilm \not \vdash \Box A \vee \Box B$.

$(\romannumeral 3)$.''$\Leftarrow$'' goes as follows.
$\vdash A \rightarrow B \vee \Diamond B \Rightarrow 
\vdash \Box (A \rightarrow B \vee \Diamond B) \Rightarrow
\vdash A \rhd B \vee \Diamond B \Rightarrow
\vdash A \rhd B$. For the other direction, suppose that
$\not \vdash A \rightarrow B \vee \Diamond B$. Thus, we
can find a model $M=\langle W,R,S,\Vdash \rangle$ 
and $m\in M$ with $M,m \Vdash A \wedge \neg B \wedge \Box \neg B$.
We now define (with $r\notin W$)
\[
\begin{array}{ll}
M'\eqbydef & 
\langle W \cup \{ r\}, 
R \cup \{ \langle r,x \rangle \mid x{=}m \mbox{ or } \langle m,x \rangle \in R \}
,\\
\ &
S \cup \{  \langle r,x,y \rangle \mid 
(x{=}y \mbox{ and }(
\langle m,x \rangle {\in} R \mbox{ or } x{=}m)) \mbox{ or } \langle m,x \rangle, \langle x,y 
\rangle {\in} R\}
, \Vdash \rangle.
\end{array}
\]
It is easy to see that $M'$ is an \ilm-model. By Lemma \ref{lemm:gesulem}
we see that $M',x\Vdash \varphi$ iff $M, x \Vdash \varphi$ for 
$x\in W$. It is also not hard to see that $M',r \Vdash \neg (A\rhd B)$.
For, we have $rRm\Vdash A$. By definition, 
$mS_ry \rightarrow (m{=}y \vee mRy)$ whence $y\not \Vdash B$.

$(\romannumeral 4)$.
By the $\sf J4$ axiom, we get one direction for free. For the 
other direction we reason as follows. Suppose 
$\ilm \nvdash A\rhd B$. Then we can find a model 
$M = \langle W,R,S, \Vdash \rangle$ and a world $l$
such that $M,l \Vdash \neg (A \rhd B)$. 
As $M,l \vdash \neg (A\rhd B)$, w can find some
$m \in M$ with 
$lRm\Vdash A \wedge \neg B \wedge \Box \neg B$.
We now define (with $r\notin W$)
\[
\begin{array}{ll}
M'\eqbydef & 
\langle W \cup \{ r\}, 
R \cup \{ \langle r,x \rangle \mid x{=}m \mbox{ or } \langle m,x \rangle \in R \}
,\\
\ &
S \cup \{  \langle r,x,y \rangle \mid 
(x{=}y \mbox{ and }(
\langle m,x \rangle {\in} R \mbox{ or } x{=}m)) \mbox{ or } \langle m,x \rangle, \langle x,y 
\rangle {\in} R\}
, \Vdash \rangle.
\end{array}
\]
It is easy to see that $M'$ is an \ilm-model.
Lemma \ref{lemm:gesulem}
and general knowledge about \ilm tells us that the 
generated submodel from $l$ is a witness to the fact that
$\ilm \nvdash \Diamond A \rightarrow \Diamond B$.\footnote{This
proof is similar to the proof of $(\romannumeral 3)$. 
However, it is not the case that one of the two follows easily from the other.}


$(\romannumeral 5)$.
The ''$\Leftarrow$'' direction is easy. For the other direction we
reason as follows.\footnote{By a similar reasoning we can prove 
$\vdash \bigwedge \neg (C_i \rhd D_i) \rightarrow A\rhd B 
\Leftrightarrow \vdash A \rhd B$.}

We assume that $\not \vdash A\rhd B$ and set out to prove
$\not \vdash \bigwedge \Diamond A_i \rightarrow A\rhd B$.
As $\not \vdash A\rhd B$, we can find $M,r\Vdash \neg (A \rhd B)$.
By Lemma \ref{lemm:gesulem} we may assume that $r$ is a root of
$M$. For all $i$, we assumed $\not \vdash \neg A_i$, whence we can
find rooted models $M_i,r_i \Vdash A_i$. As in the other cases, we define a model
$\tilde M$ that arises by gluing $r$ under all the $r_i$. Clearly
we now see that 
$\tilde M , r \Vdash \bigwedge \Diamond A_i \wedge \neg (A \rhd B)$.

$(\romannumeral 6)$. First, suppose that 
$\ilm \vdash \Box \bot \rightarrow A$. Then, from 
$\ilm \vdash \Box \bot \vee \Diamond \top$, the observation
that
$\ilm \vdash \Diamond \top \leftrightarrow \Diamond \Box \bot$
and our assumption, we
get $\ilm \vdash A \vee \Diamond A$.

For the other direction, we suppose that 
$\ilm \not \vdash \Box \bot \rightarrow A$. Thus, we have a counter model
$M$ and some $m\in M$ with $m\Vdash \Box \bot, \neg A$. Clearly,
at the submodel generated from $m$, that is, a single point, we see
that $\neg A \wedge \Box \neg A$ holds. Consequently 
$\ilm \neg \vdash A \vee \Diamond A$.

$(\romannumeral 7)$.
This follows immediately from $(\romannumeral 6)$ 
and $(\romannumeral 3)$.

\end{proof}

Note that, as \ilm is conservative over \gl, all of the above statements
not involving $\rhd$ also hold for \gl. The same holds for 
derived statements. For example, from Lemma \ref{lemm:derivablerules}
we can combine $(\romannumeral 3)$ and 
$(\romannumeral 4)$ to obtain 
$\ilm \vdash A \rightarrow B \vee \Diamond B \Leftrightarrow 
\ilm \vdash \Diamond A \rightarrow \Diamond B$. Consequently, the same
holds true for \gl.

\subsection{Decidability}

%
It is well known that \ilm has the finite model property. 
It is not hard to re-use worlds in the presented construction
method so that we would end up with a finite counter model.
Actually, this is precisely what has been done in \cite{joo98}.
In that paper, one of the invariants was
``there are no deficiencies''. We have chosen not to include
this invariant in our presentation, as this omission 
simplifies the presentation. Moreover, for our purposes
the completeness without the finite model property obtained via
our construction method suffices.

Our purpose to include a new proof of the well known completeness 
of \ilm is twofold. On the one hand the new proof serves well to expose the
construction method. On the other hand, it is an indispensable
ingredient in proving Theorem \ref{theo:ilmessig}.

\section{Essentially $\Sigma_1$-sentences of \ilm}\label{sect:essigma}
In this section we
will answer the question which modal interpretability sentences are in theories $T$  
provably $\Sigma_1$ for any realization. We call these
sentences essentially $\Sigma_1$-sentences. We shall answer the 
question only for $T$ an essentially reflexive theory.

This question has been solved for provability logics by Visser 
in \cite{Vis93b}. In \cite{Jopia96}, de Jongh and Pianigiani
gave an alternative solution by using the logic \ilm.
Our proof shall use their proof method.

We will perform our argument
fully in \ilm. It is very tempting to think that our result would be an
immediate corollary from for example \cite{Goris03}, \cite{Japa94} or
\cite{igna93}. This would be the case, if a construction method were 
worked out for the logics from these respective papers. In \cite{Goris03} 
a sort of a  construction method is indeed worked out. This construction method 
should however be a bit sharpened to suit our purposes. Moreover that sharpening
would essentially reduce to the solution we present here.

\subsection{Model construction}

Throughout this subsection, unless mentioned otherwise, 
$T$ will be an essentially reflexive recursively 
enumerable arithmetical theory. By Theorem \ref{theo:shav} we thus know that
$\intl{T}=\ilm$. Let us first say more precisely what we mean by an 
essentially $\Sigma_1$-sentence.

\begin{definition}
A modal sentence $\varphi$ is called an essentially $\Sigma_1$-sentence with respect to a theory $T$,
if $\forall *\ \varphi^*\in \Sigma_1(T)$. Likewise, a formula
$\varphi$ is essentially $\Delta_1$ if 
$\forall *\ \varphi^*\in \Delta_1(T)$
\end{definition}
If $\varphi$ is an essentially $\Sigma_1$-formula for $T$ we will 
also write $\varphi \in \Sigma_1(T)$. Analogously for $\Delta_1(T)$. For the rest of this section, $T$ will always be a theory that has \ilm as its interpretability logic thereby making explicit reference to $T$ unnecessary as we shall see.

\begin{theorem}\label{theo:delta}
Modulo modal logical equivalence, there exist just two
essentially $\Delta_1$-formulas in the language of \ilm. That is, $\Delta_1(T)= \{ \top, \bot\}$.
\end{theorem}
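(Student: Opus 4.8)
The plan is to reduce the statement to a pair of purely modal ``self-prover'' conditions that every essentially $\Delta_1$ formula must satisfy, and then to refute the existence of any such formula that is neither $\top$ nor $\bot$ by a short gluing argument. First I would dispose of the easy inclusion: under any realization $*$ both $\top$ and $\bot$ translate to (provably) $\Delta_0\subseteq\Delta_1$ sentences, so both are essentially $\Delta_1$, and they are plainly not \ilm-equivalent. It then remains to show that no further formulas arise modulo \ilm-equivalence.

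The key arithmetical input is provable $\Sigma_1$-completeness. I would show that if $\varphi$ is essentially $\Delta_1$, then
\[
\ilm \vdash \varphi \to \Box\varphi \qquad\text{and}\qquad \ilm \vdash \Diamond\varphi \to \varphi .
\]
For the first condition, being essentially $\Delta_1$ gives in particular $\varphi^*\in\Sigma_1(T)$ for every $*$, so $T$ proves $\varphi^*$ equivalent to a genuine $\Sigma_1$ sentence; provable $\Sigma_1$-completeness then yields $T\vdash \varphi^*\to\Box_T\varphi^*$, that is $T\vdash(\varphi\to\Box\varphi)^*$, for every $*$. Since $\intl{T}=\ilm$ by Theorem \ref{theo:shav}, this means $\ilm\vdash\varphi\to\Box\varphi$. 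The second condition is dual: being $\Delta_1(T)$ also makes $\varphi^*$ provably $\Pi_1$, so $\neg\varphi^*$ is provably $\Sigma_1$ and hence $T\vdash \neg\varphi^*\to\Box_T\neg\varphi^*$; contraposing gives $T\vdash(\Diamond\varphi\to\varphi)^*$ for all $*$, whence $\ilm\vdash\Diamond\varphi\to\varphi$.

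With these two theorems in hand I would argue semantically. Suppose $\varphi$ is essentially $\Delta_1$ yet $\ilm\not\vdash\varphi$ and $\ilm\not\vdash\neg\varphi$. By Theorem \ref{theo:ilmcomplete} there are models with worlds $w_0\not\Vdash\varphi$ and $w_1\Vdash\varphi$, and by Lemma \ref{lemm:gesulem} I may pass to generated submodels so that $w_0,w_1$ are roots of rooted \ilm-models $M_0,M_1$. Gluing $M_0$ and $M_1$ under a fresh root $r$, exactly as in the construction used for Lemma \ref{lemm:derivablerules}$(\romannumeral 2)$, produces an \ilm-model $M$ in which $rRw_0$, $rRw_1$, and the submodels generated by $w_0$ and $w_1$ are preserved. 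By soundness the two displayed theorems hold at every world of $M$. Since $rRw_1$ and $w_1\Vdash\varphi$ we get $r\Vdash\Diamond\varphi$, hence $r\Vdash\varphi$ by the second condition; then from $r\Vdash\varphi$ and $rRw_0$ the first condition forces $r\Vdash\Box\varphi$ and so $w_0\Vdash\varphi$, contradicting $w_0\not\Vdash\varphi$. Thus every essentially $\Delta_1$ formula is \ilm-equivalent to $\top$ or to $\bot$.

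The main obstacle is the second step, the passage from the arithmetical notion ``essentially $\Delta_1$'' to the modal self-prover pair $\varphi\to\Box\varphi$ and $\Diamond\varphi\to\varphi$; this is where the only genuine metamathematical content (provable $\Sigma_1$-completeness together with the arithmetical completeness of \ilm) enters. Everything afterwards is a routine soundness-plus-gluing argument that reuses machinery already developed in the excerpt.
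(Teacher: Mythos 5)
Your proof is correct and takes essentially the same route as the paper: the identical arithmetical input (provable $\Sigma_1$-completeness applied to both $\varphi^*$ and $\neg\varphi^*$, plus arithmetical completeness $\intl{T}=\ilm$ from Theorem~\ref{theo:shav}) reduces the claim to a purely modal disjunction property. The only difference is presentational: the paper passes through $\ilm\vdash\Box\varphi\vee\Box\neg\varphi$ and cites the admissible rules of Lemma~\ref{lemm:derivablerules}$(\romannumeral 1)$--$(\romannumeral 2)$, whereas you work with the equivalent pair $\varphi\to\Box\varphi$, $\Diamond\varphi\to\varphi$ and re-derive the disjunction property by inlining the same two-countermodels-under-a-fresh-root gluing that proves those rules.
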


\begin{proof}
Let $\varphi$ be a modal formula. If $\varphi \in \Delta_1(T)$, then, by 
provably $\Sigma_1$-completeness, both 
$\forall *\ T\vdash \delta^* \rightarrow \Box \delta^*$ and
$\forall *\ T\vdash \neg \delta^* \rightarrow \Box \neg \delta^*$.
Consequently
$\forall *\ T\vdash \Box \delta^* \vee \Box \neg \delta^*$.
Thus, $\forall *\ T\vdash (\Box \delta \vee \Box \neg \delta)^*$
whence
$\ilm \vdash \Box \delta \vee \Box \neg \delta$. By Lemma \ref{lemm:derivablerules}
we see that $\ilm \vdash \delta$ or $\ilm \vdash \neg \delta$.
\end{proof}

We proved Theorem \ref{theo:delta} for the interpretability logic of 
essentially reflexive theories. It is not hard to see that the theorem also
holds for finitely axiomatizable theories. The only ingredients that we need
to prove this are [$\ilp \vdash \Box A \vee \Box B$ iff. 
$\ilp \vdash \Box A$ or $\ilp \vdash \Box B$] and
[$\ilp \vdash \Box A$ iff. $\ilp \vdash A$]. As these two admissible rules also
hold for \gl, we see that Theorem \ref{theo:delta} also holds for \gl.

The following lemma is the only arithmetical ingredient in our classification of the essentially $\Sigma_1$ formulas in the language of \ilm. 

\begin{lemma}\label{lemm:sigmasufficient}
If $\varphi \in \Sigma_1 (T)$, then, for any $p$ and $q$, we have
$\ilm \vdash p\rhd q \rightarrow p\wedge \varphi \rhd q\wedge \varphi$.
\end{lemma}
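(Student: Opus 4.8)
The plan is to reduce the statement to an arithmetical fact about interpretations and then invoke the arithmetical completeness of \ilm. Since $\intl{T}=\ilm$ by Theorem \ref{theo:shav}, it suffices to show that for every realization $*$ the arithmetical sentence $(p\rhd q)^\ast \to (p\wedge\varphi \rhd q\wedge\varphi)^\ast$ is a theorem of $T$; writing $\rhd_T$ for the arithmetization of interpretability over $T$, this reads $T \vdash (p^\ast \rhd_T q^\ast) \to (p^\ast \wedge \varphi^\ast \rhd_T q^\ast \wedge\varphi^\ast)$. Because $\varphi\in\Sigma_1(T)$, for each $*$ the sentence $\varphi^\ast$ is $T$-provably equivalent to a genuine $\Sigma_1$ sentence, and since the side condition attached to a theory may be replaced by a $T$-provable equivalent without affecting interpretability, I may assume $\varphi^\ast$ is literally $\Sigma_1$.

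The arithmetical heart of the matter is the persistence of $\Sigma_1$ sentences under interpretations: for theories extending \ea and an interpretation $j$ of $V$ in $U$, and any $\Sigma_1$ sentence $S$, one has $U \vdash S \to S^{j}$, uniformly and provably in \ea. The reason is that an interpretation $j$ embeds the numbers of the ``outer'' theory as an initial segment of the numbers of the interpreted ``inner'' model, and $\Delta_0$-formulas are absolute along such initial segments; hence a witness to the existential quantifier of $S$ found outside is still a witness inside, so $S$ transfers to $S^{j}$.

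With this in hand the assembly is short, and I would carry it out inside $T$. Assume $p^\ast \rhd_T q^\ast$ and fix an interpretation $j$ witnessing that $T+p^\ast$ interprets $T+q^\ast$. Then $j$ equally interprets $T+q^\ast$ inside the larger theory $T+p^\ast+\varphi^\ast$. By persistence, $T+p^\ast \vdash \varphi^\ast \to (\varphi^\ast)^{j}$, hence $T+p^\ast+\varphi^\ast \vdash (\varphi^\ast)^{j}$. Thus $j$ verifies all $j$-translations of the axioms of $T+q^\ast$ together with the extra axiom $(\varphi^\ast)^{j}$, which is exactly an interpretation of $T+q^\ast+\varphi^\ast$ in $T+p^\ast+\varphi^\ast$, i.e. $p^\ast\wedge\varphi^\ast \rhd_T q^\ast\wedge\varphi^\ast$. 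As the whole argument formalizes in $T$, I obtain the required implication for every $*$, and completeness of \ilm finishes the proof.

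I expect the main obstacle to be the careful formalized treatment of $\Sigma_1$-persistence $U\vdash S\to S^{j}$: one must check that the initial-segment embedding and the $\Delta_0$-absoluteness it provides are available provably in the base theory, and that the statement is uniform in $j$ so that it survives under the existential quantifier over interpretations hidden in $\rhd_T$. It is worth stressing that the weaker modal fact $\ilm\vdash\varphi\to\Box\varphi$, which already follows from $\varphi\in\Sigma_1(T)$ by provable $\Sigma_1$-completeness, is by itself not enough: not every self-prover is essentially $\Sigma_1$, and indeed the route through Montagna's principle only yields $p\rhd q \to p\wedge\varphi\rhd q\wedge\Box\varphi$, from which $\Box\varphi$ cannot in general be strengthened to $\varphi$ on the right. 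It is precisely the transfer of the $\Sigma_1$ witness that repairs this gap.
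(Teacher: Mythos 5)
The paper offers no proof of Lemma \ref{lemm:sigmasufficient}: it is stated as ``the only arithmetical ingredient'' of the section, the intended justification being that the arithmetical soundness proof of Montagna's principle ${\sf M}$ uses nothing about the side formula $\Box C$ beyond its being provably $\Sigma_1$. Your route --- reduce via Theorem \ref{theo:shav} to showing $T\vdash (p^*\rhd_T q^*)\to(p^*\wedge\varphi^*\rhd_T q^*\wedge\varphi^*)$ for every realization, then re-use the given interpretation $j$ after observing that $(\varphi^*)^j$ follows from $\varphi^*$ --- is exactly that intended argument, and your closing remark that provable $\Sigma_1$-completeness alone would only put $\Box\varphi$ on the right agrees with the paper's own observation at the end of Section \ref{sect:essigma} that $\Sigma_1$-completeness cannot substitute for this lemma.

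The genuine gap is in your key lemma. As you state it --- for arbitrary theories extending \ea and an arbitrary interpretation $j$ of $V$ in $U$, one has $U\vdash S\to S^{j}$ for $\Sigma_1$ sentences $S$ --- persistence is false. What one gets for a general sequential $U$ (Pudl\'ak \cite{pudl85}) is only a $U$-definable cut $I$ of the $U$-numbers together with a definable initial embedding of $I$ into the internal numbers of $j$; a witness for the existential quantifier of $S$ may lie above $I$, and then nothing transfers. Were persistence true in your generality, ${\sf M}$ would be arithmetically valid for finitely axiomatized sequential theories such as \gb, whereas their interpretability logic is \ilp, in which ${\sf M}$ is not derivable. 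What rescues the argument is the standing hypothesis of this subsection that $T$ is essentially reflexive: then $T+p^*$ proves full induction, every definable cut provably exhausts the numbers, the initial embedding is total, and your witness does transfer. You should state the persistence lemma with that hypothesis, and check that it is provable inside $T$ uniformly in $j$, so that it can be applied under the existential quantifier hidden in $\rhd_T$; with that repair the proof is complete.
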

Before we come to prove the main theorem of this section, we first
need an additional lemma.

\begin{lemma}\label{lemm:under}
Let $\Delta_0$ and $\Delta_1$ be maximal \ilm-consistent sets.
There is a maximal \ilm-consistent set
$\Gamma$ such that $\Gamma \sucs \Delta_0, \Delta_1$.
\end{lemma}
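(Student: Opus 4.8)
The goal is to construct a maximal \ilm-consistent set $\Gamma$ whose $\sucs$-successors include both given sets $\Delta_0$ and $\Delta_1$. Recall that $\Gamma \sucs \Delta$ means exactly that $\Box A \in \Gamma$ implies $A, \Box A \in \Delta$. My plan is to build $\Gamma$ by a standard Lindenbaum argument after isolating what $\Gamma$ must \emph{refute}: if $\Gamma \sucs \Delta_i$ is to hold, then whenever $A \notin \Delta_i$ (equivalently $\neg A \in \Delta_i$) we cannot have $\Box A \in \Gamma$, so $\Gamma$ must contain $\Diamond \neg A = \neg \Box A$. Dually, the formulas we want $\Gamma$ to \emph{prove} under $\Box$ are constrained by what both $\Delta_0$ and $\Delta_1$ agree on.

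First I would identify a natural candidate seed set. Put
\[
\Gamma_0 \eqbydef \{ \Diamond C \mid C \in \Delta_0 \cup \Delta_1 \} \cup \{ \Box B \mid \Box B \in \Delta_0 \cap \Delta_1 \}.
\]
The intuition is that $\Gamma$ should sit strictly below both $\Delta_i$ in the $R$-ordering of a model: every formula true at $\Delta_0$ or at $\Delta_1$ becomes possible at $\Gamma$, and anything $\Gamma$ forces with $\Box$ must already be boxed-and-true at both successors. I would then take $\Gamma$ to be any maximal \ilm-consistent extension of $\Gamma_0$, obtained by Lindenbaum's lemma once $\Gamma_0$ is shown consistent.

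The two things to verify are consistency of $\Gamma_0$ and that the resulting $\Gamma$ actually satisfies $\Gamma \sucs \Delta_i$. For the successor property: suppose $\Box A \in \Gamma$; I must show $A, \Box A \in \Delta_i$. Here I would argue by contraposition. If $A \notin \Delta_i$ then $\neg A \in \Delta_i$, so $\Diamond \neg A \in \Gamma_0 \subseteq \Gamma$, contradicting $\Box A \in \Gamma$ by consistency; and if $\Box A \notin \Delta_i$ then $\neg \Box A = \Diamond \neg \Box A \in \Delta_i$ (using that $\Delta_i$ is an \mcs) so $\Diamond \Diamond \neg A$-type reasoning, more cleanly $\Diamond(\neg \Box A) \in \Gamma_0$, again clashes with $\Box A \in \Gamma$ via the \gl-valid principle $\Box A \to \Box \Box A$. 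Thus both $A$ and $\Box A$ land in $\Delta_i$, giving $\Gamma \sucs \Delta_i$.

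The main obstacle is establishing that $\Gamma_0$ is consistent, and this is where the pure-provability character of $\sucs$ (it ignores $\rhd$) does the work. A failure of consistency would yield, by compactness, finite sets giving
\[
\il \vdash \Box B_1 \wedge \cdots \wedge \Box B_k \to \neg(\Diamond C_1 \wedge \cdots \wedge \Diamond C_m),
\]
with each $\Box B_j \in \Delta_0 \cap \Delta_1$ and each $C_\ell$ belonging to $\Delta_0$ or to $\Delta_1$. Splitting the $C_\ell$ according to which $\Delta_i$ they come from and applying the \gl-reasoning inside each $\Delta_i$ (where $\Box B_j \in \Delta_i$ lets one transfer $\Box B_j$ past the diamonds), I expect to derive that one of $\Delta_0$, $\Delta_1$ is itself inconsistent, a contradiction. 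The clean way to package this is to note that $\sucs$-reachability is governed entirely by the provability fragment \gl, so the existence of a common $\sucs$-predecessor reduces to the standard fact that in \gl one can always glue two maximal consistent sets under a fresh root; I would phrase the consistency check in exactly that modal-logical language, since the interpretability axioms $\sf J1$--$\sf J5$ and the principle $\sf M$ play no role in the definition of $\sucs$.
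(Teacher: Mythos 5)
Your verification that any maximal consistent $\Gamma \supseteq \Gamma_0$ satisfies $\Gamma \sucs \Delta_i$ uses only the diamond part of your seed set, and that part is fine. The genuine problem is the second component $\{ \Box B \mid \Box B \in \Delta_0 \cap \Delta_1 \}$: it is not required for $\Gamma \sucs \Delta_i$ (the definition of $\sucs$ only constrains which boxes $\Gamma$ \emph{may} contain, not which it \emph{must}), and it makes $\Gamma_0$ inconsistent in general, so the Lindenbaum step fails. Concretely, take $\Delta_0 = \Delta_1$ to be any maximal consistent set containing $\Box\bot$ (such sets exist; maximal consistent sets of \gl need not be reflexive). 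Then $\Box\bot \in \Gamma_0$ while also $\Diamond\top \in \Gamma_0$, since $\top \in \Delta_0$. The same collapse occurs whenever some $\Box B \in \Delta_0 \cap \Delta_1$ has $\neg B \in \Delta_0 \cup \Delta_1$. For the same reason your consistency sketch cannot be repaired: from $\vdash \Box B_1 \wedge \dots \wedge \Box B_k \to \bigvee_\ell \Box \neg C_\ell$ one cannot conclude that some $\Delta_i$ is inconsistent, because $\Box B_j \in \Delta_i$ does not yield $B_j \in \Delta_i$.

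The fix is simply to delete the box component. What remains is exactly the seed set the paper uses, namely $\Gamma' = \{ \Diamond A \mid A \in \Delta_0 \} \cup \{ \Diamond B \mid B \in \Delta_1 \}$, and its consistency is proved syntactically rather than by gluing models: an inconsistency would give, by compactness, $\vdash \bigvee_i \Box \neg A_i \vee \bigvee_j \Box \neg B_j$ with $A_i \in \Delta_0$ and $B_j \in \Delta_1$; the admissible rules of Lemma \ref{lemm:derivablerules} ($\vdash \Box A \vee \Box B$ implies $\vdash \Box A$ or $\vdash \Box B$, and $\vdash \Box A$ implies $\vdash A$) then force $\vdash \neg A_i$ or $\vdash \neg B_j$, contradicting the consistency of $\Delta_0$ or $\Delta_1$. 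Your closing remark that one could instead glue two rooted models under a fresh root is a sound alternative route for this reduced seed set, but as written your proof does not establish the lemma.
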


\begin{proof}
We show that 
$\Gamma' \eqbydef \{ \Diamond A \mid A \in \Delta_0\} \cup 
\{ \Diamond B \mid B \in \Delta_1 \}$ is consistent. Assume for a 
contradiction that $\Gamma'$ were not consistent. Then, by compactness,
for finitely many $A_i$ and $B_j$,
\[
\bigwedge_{A_i \in \Delta_0}\Diamond A_i \wedge
\bigwedge_{B_j \in \Delta_1}\Diamond B_j \vdash \bot
\]
or equivalently
\[
\vdash \bigvee_{A_i \in \Delta_0}\Box \neg A_i \vee 
\bigvee_{B_j \in \Delta_1}\Box \neg B_j.
\]
By Lemma \ref{lemm:derivablerules} we see that then either 
$\vdash \neg A_i$ for some $i$, or 
$\vdash \neg B_j$ for some $j$.
This contradicts the consistency of $\Delta_0$ and
$\Delta_1$.
\end{proof}

With this lemma and by postponing the hard work to Subsection{subs:sigmaLemma} we can now prove the main theorem of this section.

\begin{theorem}\label{theo:ilmessig}
$\varphi \in \Sigma_1 (T) \Leftrightarrow \ilm \vdash 
\varphi \leftrightarrow \bigvee_{i\in I}\Box C_i$ for some
$\{  C_i\}_{i\in I}$.
\end{theorem}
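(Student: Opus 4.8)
The statement splits into the trivial arithmetical direction and the substantial modal direction.

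The backward direction ($\Leftarrow$) is short. Assume $\ilm \vdash \varphi \leftrightarrow \bigvee_{i\in I}\Box C_i$; since modal formulas are finite, $I$ is finite. Fix an arbitrary realization $*$. By Theorem \ref{theo:shav} we have $\intl{T}=\ilm$, so arithmetical soundness gives $T\vdash \varphi^* \leftrightarrow \bigvee_{i\in I}(\Box C_i)^*$. Each $(\Box C_i)^*$ is a provability assertion of $T$ and hence $\Sigma_1$, and a finite disjunction of $\Sigma_1$-sentences is, provably in $T$, again $\Sigma_1$. Thus $\forall *\ \varphi^*\in\Sigma_1(T)$, i.e. $\varphi\in\Sigma_1(T)$.

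For the forward direction ($\Rightarrow$) the plan is to convert the arithmetical hypothesis into a purely modal one and then read off the normal form. Assume $\varphi\in\Sigma_1(T)$. By Lemma \ref{lemm:sigmasufficient} the formula $\varphi$ enjoys the modal \emph{persistence} property $\ilm\vdash p\rhd q\rightarrow p\wedge\varphi\rhd q\wedge\varphi$ for all $p,q$, and this is the only consequence of $\Sigma_1$-ness we shall use. Fix a finite set $\mathcal{D}$ of formulas closed under subformulas and single negations with $\varphi\in\mathcal{D}$, and set
\[
\psi \ :=\ \bigvee\{\,\Box C \mid C\in\mathcal{D}\ \text{and}\ \ilm\vdash\Box C\rightarrow\varphi\,\},
\]
a finite disjunction of $\Box$-formulas. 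The inclusion $\ilm\vdash\psi\rightarrow\varphi$ is immediate from the defining condition on the disjuncts, so it remains to establish $\ilm\vdash\varphi\rightarrow\psi$; this is the heart of the matter.

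To prove $\ilm\vdash\varphi\rightarrow\psi$ I would argue by contraposition. If $\ilm\not\vdash\varphi\rightarrow\psi$, then by the completeness theorem (Theorem \ref{theo:ilmcomplete}) there is an \ilm-model with a world $w$ such that $w\Vdash\varphi\wedge\neg\psi$; thus $w\Vdash\varphi$ while $w\Vdash\Diamond\neg C$ for every $C\in\mathcal{D}$ with $\ilm\vdash\Box C\rightarrow\varphi$. The goal is to manufacture from this situation a genuine failure of the persistence property, contradicting Lemma \ref{lemm:sigmasufficient}. Concretely one selects $p,q$ reading off the $\neg C$-successors of $w$, glues the relevant generated submodels below a fresh root (using Lemma \ref{lemm:under} to obtain a common $\sucs$-predecessor) and closes the resulting structure off to an adequate \ilm-frame via the \ilm-closure Lemma \ref{lemm:ilmextension}. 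In the resulting model one wants a world $v$ and a successor $u$ (the copy of $w$) with $v\Vdash p\rhd q$, $u\Vdash p\wedge\varphi$, yet no $S_v$-successor of $u$ satisfying $q\wedge\varphi$, so that $v\not\Vdash p\wedge\varphi\rhd q\wedge\varphi$.

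The main obstacle is exactly this model construction: turning ``$w\Vdash\varphi$ but no $\varphi$-implying box holds at $w$'' into an honest violation of $\Sigma_1$-persistence. The delicate points are (i) arranging the $S$-relations so that $p\rhd q$ becomes valid at the constructed root while every $q$-world reachable by the required $S_v$-step avoids $\varphi$, and (ii) guaranteeing throughout, by invoking Lemma \ref{lemm:ilmextension} and the invariants underlying the Main Lemma, that the outcome is a bona fide \ilm-model, so that soundness yields the desired contradiction. This construction is precisely the $\Sigma$-Lemma deferred to the next subsection; once it is in hand, the two directions combine to give $\varphi\in\Sigma_1(T)\Leftrightarrow\ilm\vdash\varphi\leftrightarrow\bigvee_{i\in I}\Box C_i$.
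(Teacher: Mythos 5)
Your overall architecture matches the paper's: the backward direction via arithmetical soundness plus the $\Sigma_1$-ness of provability statements, and the forward direction by contraposition, with Lemma \ref{lemm:sigmasufficient} as the sole arithmetical ingredient and a glued model witnessing a failure of $p\rhd q\rightarrow p\wedge\varphi\rhd q\wedge\varphi$. The gap is in your reduction to the candidate $\psi:=\bigvee\{\Box C\mid C\in\mathcal{D},\ \ilm\vdash\Box C\rightarrow\varphi\}$ with $\mathcal{D}$ a subformula-and-single-negation closure containing $\varphi$. The theorem quantifies over arbitrary $C_i$, and these cannot in general be drawn from such a $\mathcal{D}$: take $\varphi=\Box p\wedge\Box q$. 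This is equivalent to $\Box(p\wedge q)$, hence is a (one-element) disjunction of $\Box$-formulas and essentially $\Sigma_1$; but $p\wedge q\notin\mathcal{D}$, and one checks that no $\Box C$ with $C\in\mathcal{D}$ both implies and is implied by $\varphi$, so that $\ilm\not\vdash\varphi\rightarrow\psi$. Hence your contrapositive, if it could be completed, would prove the false statement $\Box p\wedge\Box q\notin\Sigma_1(T)$; the model with $w\Vdash\varphi\wedge\neg\psi$ exists, yet no violation of persistence can be extracted from it. There is no a priori syntactic bound on the $C_i$; the finiteness of the disjunction has to come out of a compactness argument over \emph{all} disjunctions of $\Box$-formulas, which is exactly what the paper's $\Sigma$-lemma (Lemma \ref{lemm:essilm}) supplies.

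The second, related gap is in the model construction. The persistence failure requires a world $l\Vdash\varphi$ and a world $r\Vdash\neg\varphi$ joined by an $S_{m_0}$-transition below a common predecessor $m_0$; by the definition of quasi-\ilm-frames this forces $\nu(l)\boxin\nu(r)$, i.e.\ one needs maximal consistent sets $\Delta_0\ni\varphi$ and $\Delta_1\ni\neg\varphi$ with $\Delta_0\boxin\Delta_1$. Your sketch starts from a single world $w\Vdash\varphi\wedge\neg\psi$ and proposes to ``read off the $\neg C$-successors of $w$'', but those successors bear no relation to $\neg\varphi$, and nothing in the sketch produces a $\neg\varphi$-world that is box-compatible with a $\varphi$-world. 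The paper obtains the pair $\Delta_0\boxin\Delta_1$ purely syntactically via Lemma \ref{lemm:essilm}, then applies Lemma \ref{lemm:under} to get $\Gamma\sucs\Delta_0,\Delta_1$ and runs the (generalized) Main Lemma on the resulting three-point quasi-\ilm-frame. That syntactic step is what your argument is missing, and it cannot be recovered from your $\psi$.
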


\begin{proof}
Let $\varphi$ be a formula that is not equivalent to a disjunction of
$\Box$-formulas. According to Lemma \ref{lemm:essilm} we can find 
\mcs's $\Delta_0$ and $\Delta_1$ with 
$\varphi \in \Delta_0 \boxin \Delta_1 \ni \neg \varphi$. By
Lemma \ref{lemm:under} we find a $\Gamma \prec \Delta_0, \Delta_1$.
We define:
\[
G\eqbydef
\langle 
\{ m_0,l,r\}, 
\{ \langle m_0 , l\rangle ,\langle m_0,r \rangle \},
\{ \langle m_0 ,l,r\rangle  \},
\{ \langle m_0 , \Gamma \rangle , \langle l, \Delta_0  \rangle , 
\langle r, \Delta_1 \rangle\}
\rangle.
\] 
We will apply a slightly generalized version of the 
main lemma to this quasi-\ilm-frame $G$. 
The finite set \adset{D} of sentences is the 
smallest set of sentences that contains $\varphi$ and that is closed under
taking subformulas and single negations. The invariants are the following.
\[
\mathcal{I} \eqbydef \left \{ \begin{array}{l} xRy \wedge x \neq m_0
\rightarrow \exists \,  A {\in}((\nu (y)\setminus \nu (x))\cap 
\{  \Box D \mid D \in \adset{D}\} ) \\ 
uS_xvRw\rightarrow uRw
\end{array}\right.
\]
In the proof of Theorem \ref{theo:ilmcomplete} we have seen that we
can eliminate both problems and deficiencies while conserving the invariants. 
The main lemma now gives us an \ilm-model $M$ with 
$M,l\Vdash \varphi$, $M,r\Vdash \neg \varphi$ and $lS_{m_0}r$. We now
pick two fresh variables $p$ and $q$. We define $p$ to be true only at 
$l$ and $q$ only at $r$. Clearly 
$m_0 \Vdash \neg (p\rhd q \rightarrow p\wedge \varphi \rhd q \wedge \varphi)$,
whence by Lemma \ref{lemm:sigmasufficient} we get $\varphi \notin \Sigma_1 (T)$.

\end{proof}

For finitely axiomatized theories $T$, our theorem does not hold, as also
$A\rhd B$ is $T$-essentially $\Sigma_1$. The following theorem says that
in this case,
$A\rhd B$ is under any $T$-realization actually equivalent to a
special $\Sigma_1$-sentence.

\begin{theorem}\label{theo:boxilp}
Let $T$ be a finitely axiomatized theory. For all arithmetical formulae 
$\alpha$, $\beta$ there exists a formula $\rho$ with
\[
T\vdash \alpha \rhd_T \beta \leftrightarrow \Box_T \rho .
\]
\end{theorem}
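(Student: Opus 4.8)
The plan is to reduce the statement to a fact about $\Sigma_1$-sentences over finitely axiomatized theories. Recall that for a finitely axiomatized theory $T$, the relation $\alpha \rhd_T \beta$ is itself $\Sigma_1$ (this is the arithmetical content mentioned just before the statement): the assertion that there is an interpretation of $T+\beta$ in $T+\alpha$ can be witnessed by the existence of a cut together with a finite object coding the translation and a $T$-proof checking the axioms, all of which is an arithmetical $\Sigma_1$-condition. So the first step I would take is to make precise \emph{why} $\alpha \rhd_T \beta$ is (provably in $T$, or at least over a reasonable base) a $\Sigma_1$-statement, invoking the fact that $T$ is finitely axiomatized so that ``$I$ interprets $T+\beta$ in $T+\alpha$'' involves only finitely many axioms to verify and hence a bounded amount of proof-checking.

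Once $\alpha \rhd_T \beta$ is pinned down as a $\Sigma_1$-formula $\sigma$, the remaining task is the standard observation that every $\Sigma_1$-sentence is, provably in $T$, equivalent to a sentence of the form $\Box_T \rho$. Here I would use the fact that a true $\Sigma_1$-sentence $\sigma \eqbydef \exists x\, \delta(x)$ with $\delta$ bounded is equivalent over $T$ to its own provability: by provable $\Sigma_1$-completeness one has $T \vdash \sigma \rightarrow \Box_T \sigma$, and by formalized $\Sigma_1$-reflection (available because $T$ extends a reflexive base and proves reflection for $\Sigma_1$-sentences) one has $T \vdash \Box_T \sigma \rightarrow \sigma$. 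Setting $\rho \eqbydef \sigma$ then gives $T \vdash \sigma \leftrightarrow \Box_T \sigma$, which is exactly the desired form with $\rho = \sigma$. Thus $\alpha \rhd_T \beta$ is $T$-provably equivalent to $\Box_T \rho$.

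Putting the two halves together, I would take $\rho$ to be the $\Sigma_1$-formula $\sigma$ expressing interpretability, and conclude
\[
T \vdash \alpha \rhd_T \beta \leftrightarrow \sigma \leftrightarrow \Box_T \sigma = \Box_T \rho .
\]
The main obstacle, and the step that requires genuine care rather than routine bookkeeping, is the first one: establishing rigorously that $\alpha \rhd_T \beta$ really is expressible as a $\Sigma_1$-sentence for \emph{finitely axiomatized} $T$. This is where finite axiomatizability is essential — for a general r.e.\ theory one would need to verify infinitely many axioms under the interpretation, which in general pushes the complexity above $\Sigma_1$. The delicate point is to exhibit the interpretation, the witnessing cut, and the proof that the (finitely many) translated axioms of $T+\beta$ follow from $T+\alpha$ all as a single existential quantifier over a bounded matrix, appealing to the fact that provability from a finite set of axioms is $\Sigma_1$ and that the relevant definable cut can be packaged together with the translation data. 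After that the equivalence with $\Box_T \rho$ is the familiar provable $\Sigma_1$-completeness plus $\Sigma_1$-reflection argument and presents no real difficulty.
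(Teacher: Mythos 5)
Your first step is fine and is indeed the arithmetical content that makes finite axiomatizability essential: for finitely axiomatized $T$ the sentence $\alpha \rhd_T \beta$ is (provably) $\Sigma_1$. The second step, however, contains a genuine error. You set $\rho := \sigma$ and claim $T \vdash \Box_T \sigma \rightarrow \sigma$ by ``formalized $\Sigma_1$-reflection, available because $T$ extends a reflexive base.'' A consistent finitely axiomatized r.e.\ theory is precisely the kind of theory that is \emph{not} essentially reflexive and cannot prove its own $\Sigma_1$-reflection: taking $\sigma = \bot$ (a $\Sigma_1$ sentence) would give $T \vdash \neg \Box_T \bot$, contradicting G\"odel's second incompleteness theorem, and more generally L\"ob's theorem shows $T \vdash \Box_T\sigma \rightarrow \sigma$ forces $T \vdash \sigma$. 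So the choice $\rho = \sigma$ cannot work except in degenerate cases.

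The missing idea is the FGH (Friedman--Goldfarb--Harrington) theorem, which is what the paper uses. One does not take $\rho$ to be $\sigma$ itself but a self-referential sentence obtained by witness comparison: $T \vdash \rho \leftrightarrow \bigl((\alpha \rhd_T \beta) \leq \Box_T \rho\bigr)$. The proof of the FGH theorem then yields only
$T \vdash \bigl((\alpha \rhd_T \beta) \vee \Box_T \bot\bigr) \leftrightarrow \Box_T \rho$,
i.e.\ the equivalence holds modulo a $\Box_T\bot$ disjunct --- exactly the slack that your reflection claim illegitimately removes. The final observation needed is that this disjunct is absorbed for the particular $\Sigma_1$ sentence at hand: $T \vdash \Box_T \bot \rightarrow \alpha \rhd_T \beta$, since an inconsistent theory interprets everything, so $T \vdash \bigl((\alpha \rhd_T \beta) \vee \Box_T\bot\bigr) \leftrightarrow \alpha \rhd_T \beta$. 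Without the witness-comparison fixed point and this last absorption step your argument does not go through.
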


\begin{proof}
The proof is a direct corollary of the so-called FGH-theorem. 
(See \cite{viss:faith02} for an exposition of the FGH-theorem.)
We take $\rho$ satisfying the following fixed point equation.
\[
T\vdash \rho \leftrightarrow ((\alpha \rhd_T \beta) \leq \Box_T \rho)
\]
By the proof of the FGH-theorem, we now see that 
\[
T\vdash ((\alpha \rhd_T \beta ) \vee \Box_T \bot) \leftrightarrow \Box_T \rho .
\]
But clearly 
$T \vdash ((\alpha \rhd_T \beta ) \vee \Box_T \bot) \leftrightarrow \alpha \rhd_T \beta$.
\end{proof}

\subsection{The $\Sigma$-lemma}\label{subs:sigmaLemma}
We can say that the proof of Theorem \ref{theo:ilmessig} contained 
three main ingredients; Firstly, the main lemma; Secondly the modal 
completeness theorem for \ilm via the construction method and; Thirdly the
$\Sigma$-lemma. In this subsection we will prove the $\Sigma$-lemma and
remark that it is in a sense optimal.

\newcommand{\boxen}{\Box_{\vee}}
\newcommand{\boxcon}{\Box_{\textup{con}}}
\newcommand{\boxconfin}{\boxcon^{\textup{fin}}}
\newcommand{\yfin}{Y^{\textup{fin}}}
\newcommand{\sfin}{S^{\textup{fin}}}
\newcommand{\E}{\exists}
\newcommand{\A}{\forall}

\begin{lemma}\label{lemm:essilm}
If $\varphi$ is a formula not equivalent to a disjunction of $\Box$-formulas.
Then there exist 
maximal \ilx-consistent sets $\Delta_0$, $\Delta_1$ such that
$\varphi \in \Delta_0 \boxin \Delta_1 \ni \neg \varphi$.
\end{lemma}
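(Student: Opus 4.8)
The plan is to establish the contrapositive: assuming that no pair of maximal \ilx-consistent sets $\Delta_0 \ni \varphi$, $\Delta_1 \ni \neg\varphi$ with $\Delta_0 \boxin \Delta_1$ exists, I would show that $\varphi$ is \ilx-provably equivalent to a (finite) disjunction of $\Box$-formulas. Throughout I write $\Box\Delta := \{ \Box A \mid \Box A \in \Delta\}$ for the box-part of a \mcs $\Delta$, let $\vdash$ abbreviate $\ilx \vdash$, and rely on the standard facts that the logic admits Lindenbaum's lemma and is compact, together with the \il-theorem $\Box A \wedge \Box B \leftrightarrow \Box (A \wedge B)$ (available since every \extil{X} is normal for $\Box$).

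First I would reformulate the hypothesis locally. Fix any maximal \ilx-consistent $\Delta_0$ with $\varphi \in \Delta_0$. If $\Box\Delta_0 \cup \{\neg\varphi\}$ were consistent, Lindenbaum would extend it to a \mcs $\Delta_1$; then $\neg\varphi \in \Delta_1$ and every $\Box A \in \Delta_0$ lies in $\Delta_1$, that is $\Delta_0 \boxin \Delta_1$, contradicting the assumption. Hence $\Box\Delta_0 \cup \{\neg\varphi\}$ is inconsistent, and by compactness finitely many $\Box A_1, \ldots, \Box A_k \in \Delta_0$ satisfy $\vdash \Box A_1 \wedge \cdots \wedge \Box A_k \to \varphi$. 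Distributing $\Box$ over the conjunction collapses the antecedent to a single box-formula $\Box B_{\Delta_0}$ with $\Box B_{\Delta_0} \in \Delta_0$ and $\vdash \Box B_{\Delta_0} \to \varphi$. Thus every \mcs containing $\varphi$ also contains some element of $\Phi := \{ \Box B \mid\, \vdash \Box B \to \varphi\}$.

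The second and final step is a global compactness argument, which I expect to be the crux. The previous paragraph says precisely that $\{\varphi\} \cup \{ \neg\psi \mid \psi \in \Phi\}$ has no \mcs extending it, hence, by Lindenbaum, is inconsistent. By compactness there are $\Box B_1, \ldots, \Box B_n \in \Phi$ with $\vdash \varphi \to \Box B_1 \vee \cdots \vee \Box B_n$. Since each $\Box B_j \in \Phi$ gives $\vdash \Box B_j \to \varphi$, we also have $\vdash \Box B_1 \vee \cdots \vee \Box B_n \to \varphi$, so $\vdash \varphi \leftrightarrow \bigvee_{j=1}^n \Box B_j$, a disjunction of $\Box$-formulas (with the convention that the empty disjunction $\bot$ covers the degenerate case in which $\varphi$ is itself \ilx-inconsistent). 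This contradicts the hypothesis on $\varphi$ and completes the contrapositive. The main obstacle is exactly this reduction of the a priori infinite family $\Phi$ to one finite disjunction; everything hinges on compactness of \ilx together with the collapse of conjunctions of boxes to a single box, both of which are available here.
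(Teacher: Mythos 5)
Your argument is correct, and it reaches the conclusion by a genuinely different route than the paper. You take the contrapositive and work on the $\varphi$-side: under the assumption that no witnessing pair exists, every maximal consistent set containing $\varphi$ must contain a single $\Box$-formula implying $\varphi$ (you collapse the finite conjunction $\Box A_1\wedge\cdots\wedge\Box A_k$ to $\Box(A_1\wedge\cdots\wedge A_k)$ immediately), and a second application of Lindenbaum plus compactness turns the resulting family $\Phi$ into a finite disjunction provably equivalent to $\varphi$. The paper instead works on the $\neg\varphi$-side: it fixes the collection of all finite disjunctions of $\Box$-formulas, considers subsets $Y$ of it that are maximally consistent with $\neg\varphi$, establishes that such a $Y$ contains a disjunct of every disjunction it contains, and shows that for at least one such $Y$ the set consisting of $\varphi$ together with the negations of all disjunctions outside $Y$ is consistent; $\Delta_0$ and $\Delta_1$ are then Lindenbaum extensions of these two explicitly described consistent sets, and the Boolean manipulation (disjunctive normal form followed by distributing $\Box$ over $\wedge$) is deferred to the very end of the argument for the claim. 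Your version is leaner --- it needs neither the auxiliary family of maximal sets $Y$ nor the final normal-form step, since conjunctions of boxes are collapsed at the outset --- while the paper's version exhibits the two consistent sets directly rather than concluding by pure contradiction. Both rest only on Lindenbaum's lemma, finitariness of the proof system, and normality of $\Box$, so both apply, as the paper remarks, to any extension of \gl; and you correctly flag the degenerate cases (the empty conjunction giving $\vdash\varphi$, hence $\varphi\leftrightarrow\Box\top$, and the empty disjunction giving $\vdash\neg\varphi$, hence $\varphi\leftrightarrow\bot$).
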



\begin{proof}
As we shall see, the reasoning below holds not only for \extil{X}, but for
any extension of \gl.
We define
\begin{eqnarray*}
	\boxen	&\eqbydef & 
\{ \bigvee_{0\leq i<n}\Box D_i \mid n\geq 0 ,\textrm{each }D_i\textrm{ an \extil{X}-formula}\},\\
	\boxcon 	&\eqbydef & \{ Y \subseteq \boxen \mid
		\{ \neg \varphi \} + Y \textrm{ is consistent and maximally such}\}.
\end{eqnarray*}
\noindent
Let us first observe a useful property of the sets $Y$ in $\boxcon$.

\begin{equation}\label{useful_property}
\bigvee_{i=0}^{n-1}\sigma_i\in Y\Rightarrow \E \,  i{<}n\ \sigma_i\in Y.
\end{equation}

To see this, let $Y\in\boxcon$ and $\bigvee_{i=0}^{n-1}\sigma_i\in Y$.
Then for each $i{<}n$ we have $\sigma_i\in\boxen$ and for some
$i{<}n$ we must have  $\sigma_i$ consistent with $Y$ (otherwise 
$\{ \neg \varphi \} + Y$ would prove
$\bigwedge_{i=0}^{n-1}\neg\sigma_i$ and be inconsistent).
And thus by the maximality of $Y$ we must have that some $\sigma_i$ is in $Y$.
This establishes \eqref{useful_property}.

\begin{claim}
For some $Y \in \boxcon$ the set
\[
\{\varphi\} + \{\neg\sigma \mid \sigma\in\boxen-Y \}
\]
is consistent.
\end{claim}

\begin{proof}[Proof of the claim]
Suppose the claim were false.
We will derive a contradiction with the assumption that 
$\varphi$ is not equivalent to a disjunction of $\Box$-formulas.
If the claim is false, then we can choose for each $Y \in \boxcon$ a finite
set $\yfin \subseteq \boxen - Y$ such that

\begin{equation}\label{f:inc0}
	\{ \varphi \} + \{ \neg \sigma \mid \sigma \in \yfin \}
\end{equation}
is inconsistent. 
Thus, certainly
for each $Y \in \boxcon$
\begin{equation}\label{equa:eenkant} \vdash \varphi \rightarrow
	\bigvee_{\sigma \in \yfin}\sigma.
\end{equation}
Now we will show that:

\begin{equation}\label{set:inconsistent1}
\{ \neg \varphi \} + \{ \bigvee_{\sigma \in \yfin} \sigma \mid Y \in \boxcon \} \textrm{ is inconsistent. }
\end{equation}
For, suppose (\ref{set:inconsistent1}) were not the case. 
Then for some $S \in \boxcon$
\[ \{ \bigvee_{\sigma \in \yfin} \sigma \mid Y \in \boxcon \} \subseteq S.\]
In particular we have
$\bigvee_{\sigma \in \sfin} \sigma \in S$.
But for all $\sigma \in \sfin$ we have $\sigma \not \in S$.
Now by \eqref{useful_property} we obtain a contradiction
and thus we have shown (\ref{set:inconsistent1}).

So we can select some finite $\boxconfin \subseteq \boxcon$ such that 

\begin{equation}\label{f:s0}
	\vdash(\bigwedge_{Y \in\boxconfin} \bigvee_{\sigma \in \yfin} \sigma)
\rightarrow \varphi.
\end{equation}
By (\ref{equa:eenkant}) we also have

\begin{equation}\label{f:s1}
	\vdash \varphi \rightarrow
		\bigwedge_{Y\in\boxconfin}\bigvee_{\sigma\in \yfin}\sigma.
\end{equation}
Combining (\ref{f:s0}) with (\ref{f:s1}) we get

\[
\vdash
		\varphi \leftrightarrow 	
		\bigwedge_{Y \in \boxconfin}\bigvee_{\sigma \in \yfin} \sigma .
\]
Bringing the right hand side of this equivalence in disjunctive 
normal form and distributing the $\Box$ over $\wedge$
we arrive at a contradiction with the assumption on $\varphi$.
\end{proof}
So, we have for some $Y \in \boxcon$ that both the sets

\begin{equation}\label{f:2}
\{ \varphi\} + \{ \neg \sigma \mid \sigma \in \Box_\vee -Y \}
\end{equation}

\begin{equation}\label{f:3}
\{ \neg \varphi \} + Y
\end{equation}
are consistent. The lemma follows by taking $\Delta_0$ and $\Delta_1$ 
extending (\ref{f:2}) and (\ref{f:3}) respectively.
\end{proof}

We have thus obtained 
$\varphi \in \Delta_0 \boxin \Delta_1 \ni \neg \varphi$
for some maximal \extil{X}-consistent sets $\Delta_0$ and 
$\Delta_1$. The relation $\boxin$ between $\Delta_0$ and $\Delta_1$
is actually the best we can get among the relations on \mcs's that we 
consider in this paper. We shall see that $\Delta_0 \sucs \Delta_1$
is not possible to get in general.

It is obvious that
that $p\wedge \Box p$ is not equivalent to
a disjunction of $\Box$-formulas. Clearly 
$p\wedge \Box p\in \Delta_0 \sucs \Delta_1 \ni \neg p \vee \Diamond \neg p$ 
is impossible. In a sense, this reflects the fact that there exist non trivial 
self-provers, as was shown by Kent (\cite{Kent73}), Guaspari (\cite{gua83}) and
Beklemishev (\cite{Bek93}). Thus, provable
$\Sigma_1$-completeness, that is 
$T\vdash \sigma \rightarrow \Box \sigma$ for $\sigma \in \Sigma_1(T)$, can 
not substitute Lemma \ref{lemm:sigmasufficient}.

\section{Self provers and $\Sigma_1$-sentences}

A self prover is a sentence $\varphi$ that implies its own provability.
That is, a sentence for which $\vdash \varphi \rightarrow \Box \varphi$,
or equivalently, 
$\vdash \varphi \leftrightarrow \varphi \wedge \Box \varphi$.
Self provers have been studied intensively amongst others by 
Kent (\cite{Kent73}),
Guaspari (\cite{gua83}), de Jongh and Pianigiani (\cite{Jopia96}).
It is easy to see that any $\Sigma_1(T)$-sentence is indeed
a self prover. We shall call such a self prover a 
\emph{trivial self prover}.

In \cite{gua83}, Guaspari has shown that there are many non-trivial 
self provers around. The most prominent example is probably 
$p\wedge \Box p$. But actually, any formula $\varphi$ will generate
a self prover $\varphi \wedge \Box \varphi$, as clearly
$\varphi \wedge \Box \varphi \rightarrow \Box (\varphi \wedge \Box \varphi)$.

\begin{definition}
A formula $\varphi$ is called a trivial self prover generator, we shall
write t.s.g., if $\varphi \wedge \Box \varphi$ is a trivial self prover.
That is, if $\varphi \wedge \Box \varphi \in \Sigma_1(T)$.
\end{definition}

Obviously, a trivial self prover is also a t.s.g. But there 
also exist other t.s.g.'s. The most prominent example is probably
$\Box \Box p \rightarrow \Box p$. A natural question is to ask for 
an easy characterization of t.s.g.'s.  In this  section we will 
give such a characterization for \gl. All results presented here are new results. In the rest of this section,
$\vdash$ will stand for derivability in \gl. We shall often write $\Sigma$
instead of $\Sigma_1$.

We say that a formula $\psi$ is $\Sigma$ in \gl, and write
$\Sigma (\psi)$, if for any theory $T$ which has \gl as its 
provability logic, we have that $\forall *\ \psi^* \in \Sigma_1(T)$.

\begin{theorem}\label{theo:disjunctive}
We have that $\Sigma(\varphi \wedge \Box \varphi)$ in \gl if and only if the 
following condition is satisfied.

For all formulae $A_l$, $\varphi_l$ and $C_m$ satisfying 
\ref{item:equi}, \ref{item:ired} and \ref{item:nonempty} we have that
$\vdash \varphi \wedge \Box \varphi \leftrightarrow \dis_m \Box C_m$. Here 
\ref{item:equi}-\ref{item:nonempty} are the following conditions.

\begin{enumerate}
\item \label{item:equi}
$\vdash \varphi \leftrightarrow \dis_l (\varphi_l \wedge \Box A_l)
\vee \dis_m \Box C_m$

\item \label{item:ired}
$\not \vdash \Box A_l \rightarrow \varphi$ for all $l$

\item \label{item:nonempty}
$\varphi_l$ is a non-empty conjunction of literals and $\Diamond$-formulas.
\end{enumerate}

\end{theorem}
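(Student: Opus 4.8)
The plan is to reduce everything to the already available classification of essentially $\Sigma_1$ formulas. Since $\varphi\wedge\Box\varphi$ contains no $\rhd$, Theorem~\ref{theo:ilmessig} together with the conservativity of \ilm over \gl gives the working criterion: $\Sigma(\varphi\wedge\Box\varphi)$ holds iff $\vdash\varphi\wedge\Box\varphi\leftrightarrow\dis_{i}\Box E_i$ for some family $\{E_i\}$. The backward half of this criterion is immediate from provable $\Sigma_1$-completeness (each $\Box_T E_i^*$ is $\Sigma_1(T)$ and a finite disjunction of $\Sigma_1$-formulas is $\Sigma_1$), and the forward half is Theorem~\ref{theo:ilmessig}. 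I use this criterion throughout. I also record its $\boxin$-reformulation, which is what Lemma~\ref{lemm:essilm} is tailored to: $\varphi\wedge\Box\varphi$ fails to be $\Sigma$ iff there are maximal consistent $\Delta_0\boxin\Delta_1$ with $\varphi,\Box\varphi\in\Delta_0$ and $\neg\varphi\in\Delta_1$ (here $\Box\varphi$ automatically transfers to $\Delta_1$, so $\neg(\varphi\wedge\Box\varphi)\in\Delta_1$). One inclusion is Lemma~\ref{lemm:essilm}; the converse is the easy observation that disjunctions of $\Box$-formulas are $\boxin$-monotone.

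For ($\Leftarrow$) I first note that every $\varphi$ admits \emph{some} decomposition satisfying \ref{item:equi}--\ref{item:nonempty}. Bring $\varphi$ into disjunctive normal form over propositional, $\Box$- and $\Diamond$-literals and, in each disjunct, collapse the boxes into one $\Box A$ via $\Box X\wedge\Box Y\leftrightarrow\Box(X\wedge Y)$; a disjunct with empty propositional/$\Diamond$-part is a pure $\Box C_m$, the others have the shape $\varphi_l\wedge\Box A_l$ with $\varphi_l$ a non-empty conjunction of literals and $\Diamond$-formulas, giving \ref{item:nonempty}. Whenever $\vdash\Box A_l\to\varphi$ the disjunct $\varphi_l\wedge\Box A_l$ is subsumed by $\Box A_l$, so I move $\Box A_l$ into the box-block and drop the old disjunct; iterating this finite process secures \ref{item:ired}. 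Applying the hypothesis to this decomposition yields $\vdash\varphi\wedge\Box\varphi\leftrightarrow\dis_m\Box C_m$, a disjunction of $\Box$-formulas, whence $\Sigma(\varphi\wedge\Box\varphi)$.

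For ($\Rightarrow$) fix an arbitrary decomposition satisfying \ref{item:equi}--\ref{item:nonempty} and set $E\eqbydef\dis_m\Box C_m$. The inclusion $\vdash E\to\varphi\wedge\Box\varphi$ is unconditional: each $\Box C_m$ is a disjunct of $\varphi$, and $\vdash\Box C_m\to\Box\Box C_m\to\Box\varphi$ by {\sf L2}. It remains to prove $\vdash\varphi\wedge\Box\varphi\to E$. Distributing $\varphi\leftrightarrow\dis_l(\varphi_l\wedge\Box A_l)\vee E$ against $\Box\varphi$ and using $\vdash E\wedge\Box\varphi\to E$, this reduces to showing, for each $l$, that $\vdash\varphi_l\wedge\Box A_l\wedge\Box\varphi\to E$. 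I argue by contraposition: a failure yields a \gl-world $w$ with $w\Vdash\varphi_l\wedge\Box A_l\wedge\Box\varphi$ and $w\not\Vdash E$, hence $w\Vdash\varphi\wedge\Box\varphi$. Let $\Delta_0$ be the \gl-theory of $w$, so $\varphi,\Box\varphi\in\Delta_0$. If I can find a maximal consistent $\Delta_1$ with $\Delta_0\boxin\Delta_1$ and $\neg\varphi\in\Delta_1$, then by the $\boxin$-reformulation $\varphi\wedge\Box\varphi$ is not a disjunction of $\Box$-formulas, contradicting $\Sigma(\varphi\wedge\Box\varphi)$.

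The crux — and the step I expect to be the main obstacle — is the existence of such a $\Delta_1$, i.e. the consistency of $\{\neg\varphi\}\cup\{\Box X\mid\Box X\in\Delta_0\}$, and this is precisely where \ref{item:ired} and \ref{item:nonempty} enter. Condition \ref{item:ired}, $\not\vdash\Box A_l\to\varphi$, supplies by completeness a world realizing $\Box A_l\wedge\neg\varphi$, giving ``room'' to falsify $\varphi$ without contradicting the box $\Box A_l$ that $\Delta_0$ insists upon; condition \ref{item:nonempty} guarantees that what distinguishes the active disjunct $\varphi_l\wedge\Box A_l$ from the pure box-block is genuinely local (a literal to re-valuate) or a $\Diamond$-requirement, so that a world with the same $\Box$-type as $w$ can kill $\varphi_l$. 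Rather than naive surgery I would realize $\Delta_1$ by the construction method: take a candidate $\neg\varphi$-label sharing the $\Box$-type of $\Delta_0$ and let the Main Lemma~\ref{lemm:main} eliminate the resulting problems and deficiencies exactly as in Theorem~\ref{theo:ilmessig}, using Lemma~\ref{lemm:under} to plant a common predecessor and relying on the $\boxin$-bookkeeping of the \ilm-construction to keep the two labels $\boxin$-related. The delicate part is to realize the $\Diamond$-witnesses demanded by $\varphi_l$ together with witnesses keeping each $\Box C_m$ false while \emph{not} reinstating $\varphi$; the disjunction property and the rule $\ilm\vdash\Box A\Leftrightarrow\ilm\vdash A$ from Lemma~\ref{lemm:derivablerules} are what stop these simultaneous demands from collapsing, and I would isolate this consistency statement as the one genuinely new lemma feeding the proof.
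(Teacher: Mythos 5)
Your $(\Leftarrow)$ direction is fine and in fact more detailed than the paper's own (the paper merely asserts that a decomposition satisfying \ref{item:equi}--\ref{item:nonempty} always exists). The $(\Rightarrow)$ direction, however, contains a genuine gap, and you have located it yourself: everything hinges on producing the ``$\neg\varphi$ side'' of the picture, and you defer exactly that step to an unproven ``genuinely new lemma''. Worse, the lemma you propose to isolate --- consistency of $\{\neg\varphi\}\cup\{\Box X\mid \Box X\in\Delta_0\}$ where $\Delta_0$ is the theory of an arbitrary world refuting $\varphi_l\wedge\Box A_l\wedge\Box\varphi\to\dis_m\Box C_m$ --- asks for more than conditions \ref{item:ired} and \ref{item:nonempty} can deliver: $\Delta_0$ insists on its \emph{entire} box-type, not just on $\Box A_l$ and $\Box\varphi$, and nothing in the hypotheses controls those other boxes. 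A $\boxin$-pair does exist a posteriori, but only between the theories of two worlds of a model one still has to build; starting from the theory of the original countermodel world is the wrong anchor.

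The two ideas that close the gap in the paper are short and purely modal, and neither appears in your sketch. First, condition \ref{item:ired} yields, via a L\"ob-style argument, the consistency of the much smaller set $\{\neg\varphi,\Box\varphi,\Box A_l\}$: if $\vdash\neg\varphi\wedge\Box\varphi\to\Diamond\neg A_l$, then $\vdash\Box A_l\to(\Box\varphi\to\varphi)$, hence by necessitation, ${\sf L2}$ and ${\sf L3}$ we get $\vdash\Box A_l\to\Box\varphi$ and therefore $\vdash\Box A_l\to\varphi$, contradicting \ref{item:ired}. This gives a rooted \gl-model $N,r\Vdash\neg\varphi\wedge\Box\varphi\wedge\Box A_l$ --- note the extra conjunct $\Box\varphi$, which your appeal to \ref{item:ired} alone does not provide and which is indispensable. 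Second, one glues $M,l\Vdash\varphi_l\wedge\Box A_l\wedge\Box\varphi$ and $N,r$ under a fresh root $w$ with $lS_wr$, additionally drawing $R$-edges from $l$ to every $R$-successor of $r$; this keeps $\Box\varphi\wedge\Box A_l$ true at $l$ (all new successors satisfy $\varphi\wedge A_l$ because $r$ forces $\Box\varphi\wedge\Box A_l$), while condition \ref{item:nonempty} is precisely what guarantees that $\varphi_l$, being a conjunction of literals and $\Diamond$-formulas, survives the addition of successors. With fresh $p,q$ true only at $l$ and $r$ respectively, $w$ refutes $p\rhd q\to p\wedge(\varphi\wedge\Box\varphi)\rhd q\wedge(\varphi\wedge\Box\varphi)$, and Lemma \ref{lemm:sigmasufficient} (with $T=\pa$) refutes $\Sigma(\varphi\wedge\Box\varphi)$ directly; no detour through the $\boxin$-criterion, Lemma \ref{lemm:under} or the construction method is needed. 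Without the L\"ob step and this explicit surgery your argument does not go through.
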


\begin{proof}
The $\Leftarrow$ direction is the easiest part. 
We can always 
find an equivalent of $\varphi$ that satisfies
\ref{item:equi}, \ref{item:ired} and \ref{item:nonempty}.
Thus, by 
assumption, $\varphi \wedge \Box \varphi$ can be written as the disjunction
of $\Box$-formulas and hence $\Sigma (\varphi \wedge \Box \varphi)$.

For the $\Rightarrow$ direction we reason as follows. Suppose we
can find $\varphi_l$, $A_l$ and $C_m$ such that 
\ref{item:equi}, \ref{item:ired} and \ref{item:nonempty} hold, but
\[
\not \vdash \varphi \wedge \Box \varphi \leftrightarrow \dis_m \Box C_m. \ \ (*)
\]
We can take now $T=\pa$ and reason as follows.
As clearly 
$\vdash \dis_m \Box C_m \rightarrow \varphi \wedge \Box \varphi$,
our assumption $(*)$ reduces to 
$\not \vdash \varphi \wedge \Box \varphi \rightarrow \dis_m \Box C_m$.
Consequently $\dis_l (\varphi_l \wedge \Box A_l)$ can not be
empty, and for some $l$ and some rooted \gl-model $M,r$  with 
root $r$, we 
have $M,l \Vdash \Box A_l \wedge \varphi_l$.

We shall now see that 
$\not \vdash \neg \varphi \wedge \Box \varphi \rightarrow \Diamond \neg A_l$.
For, suppose for a contradiction that 
\[
\vdash  \neg \varphi \wedge \Box \varphi \rightarrow \Diamond \neg A_l .
\]
Then also 
$\vdash \Box A_l \rightarrow (\Box \varphi \rightarrow \varphi)$, whence
$\vdash \Box A_l \rightarrow \Box(\Box \varphi \rightarrow \varphi) \rightarrow
\Box \varphi$. And by 
$\Box A_l \rightarrow (\Box \varphi \rightarrow \varphi)$ again, we get 
$\vdash \Box A_l \rightarrow \varphi$ which contradicts \ref{item:ired}.
We must conclude that indeed
$\not \vdash \neg \varphi \wedge \Box \varphi \rightarrow \Diamond \neg A_l$,
and thus we have a rooted tree model $N,r$ for \gl 
with $N,r \Vdash \neg \varphi, \Box \varphi , \Box A_l$.

We can 
now ``glue'' a world $w$ below $l$ and $r$, set $lS_wr$ and 
consider the smallest \ilm-model extending this. We have depicted this
construction in Figure \ref{pict:selfprover}.
\begin{figure}
\begin{center}
\input{selfprover.pstex_t}
\end{center}
\caption{T.s.g.'s}\label{pict:selfprover}
\end{figure}
Let us also give a precise definition. If 
$M\eqbydef \langle W_0,R_0,\Vdash_0\rangle$ and 
$N\eqbydef \langle W_1,R_1,\Vdash_1\rangle$, then we define 
\[
\begin{array}{ll}
L \eqbydef & \langle W_0 \cup W_1, 
R_0 \cup R_1 \cup \{ \langle w,x \rangle \mid x \in W_0 \cup W_1\}
\cup \{ \langle l,y \rangle \mid N \models rRy\},\\
 \ &  \{ \langle w,l,r \rangle  \} \cup 
\{ \langle x,y,z\rangle \mid L \models xRyR^*z\}, \Vdash_0 \cup \Vdash_1 \rangle .
\end{array}
\]
We observe that, by Lemma \ref{lemm:gesulem} 
$L,r \Vdash \Box \varphi \wedge \Box A_l \wedge \neg \varphi$
and $L\models rRx \Rightarrow L,x\Vdash \varphi \wedge A_l$.
Also, if $L\models lRx$, then $L,x \Vdash \varphi \wedge A_i$, whence
$L,l\Vdash \Box \varphi \wedge \Box A_l$. As 
$M,l\Vdash \varphi_l$ and $\varphi_l$ only contains literals and
and diamond-formulas, we see that $L,l\Vdash \varphi_l$, 
whence $L,l \Vdash \varphi \wedge \Box \varphi$. As 
$L,r\Vdash \neg \varphi \wedge \Box \varphi$ we see that
$L,w \Vdash \neg \Box (\varphi \wedge \Box \varphi )$.

As in the proof of Theorem \ref{theo:ilmessig}, we can
take some fresh $p$ and $q$ and define $p$ to 
hold only at $l$ and $q$ to hold only at $r$. Now, 
clearly 
$w \not \Vdash p\rhd q \rightarrow p \wedge (\varphi \wedge \Box \varphi)\rhd
q \wedge (\varphi \wedge \Box \varphi)$, whence, by Lemma
\ref{lemm:sigmasufficient} we conclude 
$\neg \Sigma (\varphi \wedge \Box \varphi)$.
\end{proof}

The above reasoning showed that $\Sigma(\varphi \wedge \Box \varphi)$ is not a sufficient condition 
for $\Sigma(\varphi)$ to hold. We shall see that even $\Sigma(\varphi \wedge \Box \varphi) \ \ \wedge \ \ 
\Sigma(\varphi \wedge \Box \neg \varphi)$ is not a sufficient condition 
for $\Sigma(\varphi)$ to hold.

Thus, to conclude this section, we remain in \gl and shall settle the 
question for which
$\varphi$ we have that 
\[
\Sigma (\varphi \wedge \Box \varphi ) \ \& \ \Sigma (\varphi \wedge \Box \neg \varphi) 
\Rightarrow \Sigma (\varphi ). \ \ (\dag)
\]
We shall see that this question is non-trivial and that it can be reduced to the characterization of 
t.s.g.'s. Again the easiest non-trivial example satisfying $(\dag)$ will be $\Box \Box p \to \Box p$.

\begin{lemma}\label{lemm:almostloeb}
\[
\begin{array}{c}
\mbox{For some (possibly empty) } \dis_i \Box C_i \mbox{ we have }
\vdash \varphi \wedge \Box \neg \varphi 
\leftrightarrow \dis_i\Box C_i \\
\mbox{iff.}\\
\vdash \Box \bot \rightarrow \varphi 
\mbox{ \ \  or \ \  } \vdash \neg \varphi
\end{array}
\]
\end{lemma}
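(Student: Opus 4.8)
The plan is to prove the two implications separately, in both cases reducing matters to the elementary semantic fact that a (possibly empty) disjunction of $\Box$-formulas is monotone along $\boxin$: if $\Delta_0 \boxin \Delta_1$ and $\bigvee_i \Box C_i \in \Delta_0$, then $\bigvee_i \Box C_i \in \Delta_1$.

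For the direction from right to left I would split on the two disjuncts. If $\vdash \neg \varphi$, then $\vdash \neg(\varphi \wedge \Box \neg \varphi)$, so $\varphi \wedge \Box \neg \varphi$ is equivalent to the empty disjunction, namely $\bot$. If instead $\vdash \Box \bot \to \varphi$, I would invoke Lemma \ref{lemm:derivablerules}$(\romannumeral 6)$ — which, as remarked after its proof, also holds for \gl — to obtain $\vdash \varphi \vee \Diamond \varphi$. Now $\varphi \vee \Diamond \varphi$ is propositionally just $\Box \neg \varphi \to \varphi$ (since $\Diamond \varphi = \neg \Box \neg \varphi$), so $\vdash \Box \neg \varphi \to \varphi$, and therefore $\vdash (\varphi \wedge \Box \neg \varphi) \leftrightarrow \Box \neg \varphi$. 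The right-hand side is a single $\Box$-formula, so $\Leftarrow$ is settled.

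For $\Rightarrow$ I would argue the contrapositive: assume $\not\vdash \Box \bot \to \varphi$ and $\not\vdash \neg \varphi$, and show that $\varphi \wedge \Box \neg \varphi$ is not equivalent to any disjunction of $\Box$-formulas. From $\not\vdash \neg \varphi$ and completeness there is a \gl-model containing a world forcing $\varphi$; choosing an $R$-maximal such world (possible because $R$ is conversely well-founded, Definition \ref{defi:frames}) gives a world all of whose $R$-successors force $\neg \varphi$, hence forcing $\varphi \wedge \Box \neg \varphi$. Its theory is an \mcs $\Delta_0$ with $\varphi \wedge \Box \neg \varphi \in \Delta_0$. From $\not\vdash \Box \bot \to \varphi$ there is a world forcing $\Box \bot \wedge \neg \varphi$; being a dead end, it forces \emph{every} $\Box$-formula, so its theory $\Delta_1$ is an \mcs with $\neg \varphi \in \Delta_1$ (hence $\neg(\varphi \wedge \Box \neg \varphi) \in \Delta_1$) and, trivially, $\Delta_0 \boxin \Delta_1$. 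By the monotonicity noted above, any disjunction of $\Box$-formulas lying in $\Delta_0$ would also lie in $\Delta_1$; since $\varphi \wedge \Box \neg \varphi$ is in $\Delta_0$ but not in $\Delta_1$, it cannot be equivalent to such a disjunction.

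The genuinely load-bearing points, and where I would expect to spend the care, are twofold: first, that mere satisfiability of $\varphi$ upgrades to satisfiability of $\varphi \wedge \Box \neg \varphi$ via the $R$-maximal-world trick (this is exactly what converse well-foundedness buys us, and it must be stated cleanly); and second, the observation that a dead-end world is a ``free'' $\boxin$-top, which is what collapses the whole non-definability argument to merely exhibiting the two worlds above. Everything else — that the theory of a world is an \mcs, and the monotonicity of $\Box$-disjunctions along $\boxin$ — is routine, so the main obstacle is really just assembling these two worlds and verifying $\Delta_0 \boxin \Delta_1$.
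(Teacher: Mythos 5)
Your proof is correct, but the forward direction takes a genuinely different route from the paper's. The paper argues purely syntactically: from $\vdash \varphi\wedge\Box\neg\varphi \leftrightarrow \dis_i\Box C_i$ (non-empty case) it passes to $\vdash\Diamond(\varphi\wedge\Box\neg\varphi)\leftrightarrow\Diamond(\dis_i\Box C_i)$, collapses both sides to get $\vdash\Diamond\varphi\leftrightarrow\Diamond\top$, and then invokes the admissible rules of Lemma \ref{lemm:derivablerules} to conclude $\vdash\Box\bot\to\varphi$; the empty case is dispatched by L\"ob. You instead argue the contrapositive semantically, exhibiting an $R$-maximal $\varphi$-world with theory $\Delta_0\ni\varphi\wedge\Box\neg\varphi$ and a dead-end $\neg\varphi$-world with theory $\Delta_1$, so that $\Delta_0\boxin\Delta_1$ holds for free and $\boxin$-persistence of disjunctions of $\Box$-formulas finishes the job. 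Your two worlds are exactly the semantic counterparts of the paper's two $\Diamond$-collapses: the $R$-maximal-world trick is the L\"ob-style theorem $\Diamond\varphi\to\Diamond(\varphi\wedge\Box\neg\varphi)$, and the dead end is $\Diamond\top\to\Diamond\Box\bot$. What your version buys is that it bypasses the admissible-rule machinery in this direction and makes visible that for formulas of the shape $\varphi\wedge\Box\neg\varphi$ the conclusion of the $\Sigma$-lemma (Lemma \ref{lemm:essilm}) is obtained for free from a dead-end world; the paper's version stays inside the calculus and records the extra information that the disjunction, when it exists, can always be taken to be $\bot$ or $\Box\bot$ (your $\Box\neg\varphi$ is provably equivalent to $\Box\bot$ under $\vdash\Box\bot\to\varphi$, so nothing is lost). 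The backward direction of your argument is essentially the paper's, both resting on Lemma \ref{lemm:derivablerules}$(\romannumeral 6)$.
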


\begin{proof}
For non-empty $\dis_i \Box C_i$ we have the following.
\[
\begin{array}{ll}
\vdash \varphi \wedge \Box \neg \varphi \leftrightarrow   \dis_i\Box C_i  & 
\Rightarrow \\
\vdash \Diamond (\varphi \wedge \Box \neg \varphi ) \leftrightarrow \Diamond 
(\dis_i\Box C_i)    & 
\Rightarrow \\
\vdash \Diamond \varphi \leftrightarrow \Diamond \top  & 
\Rightarrow \\
\vdash \Box \bot \rightarrow \varphi     
\end{array}
\]
Here, the final step in the proof comes from Lemma \ref{lemm:derivablerules}.

On the other hand, if $\vdash \Box \bot \rightarrow \varphi$, we
see that $\vdash \neg \varphi \rightarrow \Diamond \top$ and 
thus $\Box \neg \varphi \rightarrow \Box \bot$, whence
$\vdash \varphi \wedge \Box \neg \varphi \leftrightarrow \Box \bot$.

In case of the empty disjunction we get 
$\vdash \varphi \wedge \Box \neg \varphi \leftrightarrow \bot$.
Then also $\vdash \Box \neg \varphi \rightarrow \neg \varphi$ and
by L\"ob $\vdash \neg \varphi$. And conversely, if 
$\vdash \neg \varphi$, then 
$\vdash \varphi \wedge \Box \neg \varphi \leftrightarrow \bot$, and
$\bot$ is just the empty disjunction.

The proof actually gives some additional information. If 
$\Sigma (\varphi \wedge \Box \neg \varphi)$ then either 
($\vdash \neg \varphi$ and 
$\vdash (\varphi \wedge \Box \neg \varphi) \leftrightarrow \bot$),
or ($\vdash \Box \bot \rightarrow \varphi$ and 
$\vdash (\varphi \wedge \Box \neg \varphi) \leftrightarrow \Box \bot$).
\end{proof}

\begin{lemma}
\[
\begin{array}{c}
\Sigma (\varphi \wedge \Box \varphi ) \wedge \Sigma (\varphi \wedge \Box \neg \varphi )
\Rightarrow \Sigma (\varphi )\\
\mbox{ iff. }\\
\Sigma (\varphi \wedge \Box \varphi ) \Rightarrow \Sigma (\varphi ) 
\mbox{ or } \vdash \varphi \rightarrow \Diamond \top
\end{array}
\]
\end{lemma}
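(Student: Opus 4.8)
The plan is to reduce the whole equivalence to one structural fact about essentially $\Sigma_1$ formulas and then to combine it with Lemma \ref{lemm:almostloeb}. For brevity let me abbreviate the hypotheses by $(A)$ for $\Sigma(\varphi\wedge\Box\varphi)$, $(B)$ for $\Sigma(\varphi\wedge\Box\neg\varphi)$ and $(C)$ for $\Sigma(\varphi)$, and write $(D)$ for the condition $\vdash\varphi\to\Diamond\top$, which is the same as $\vdash\Box\bot\to\neg\varphi$. So I must prove $[(A)\wedge(B)\Rightarrow(C)]$ if and only if $[(A)\Rightarrow(C)]$ or $(D)$.

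First I would establish that every essentially $\Sigma_1$ formula is \emph{decided at the leaves}: if $\Sigma(\psi)$ then $\vdash\Box\bot\to\psi$ or $\vdash\Box\bot\to\neg\psi$. For this I invoke the classification of essentially $\Sigma_1$ formulas from Theorem \ref{theo:ilmessig}, now read for \gl, writing $\vdash\psi\leftrightarrow\dis_{i\in I}\Box C_i$; when $I=\varnothing$ the disjunction is $\bot$ and $\vdash\Box\bot\to\neg\psi$, and when $I\neq\varnothing$ every $\Box C_i$ is vacuously true at any world forcing $\Box\bot$, so $\psi$ holds there and $\vdash\Box\bot\to\psi$ by completeness. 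Applying this to $\psi=\varphi\wedge\Box\varphi$, and using that $\Box\varphi$ is vacuously true at such worlds so that $\varphi\wedge\Box\varphi$ and $\varphi$ agree there, I obtain the dichotomy that $(A)$ implies $\vdash\Box\bot\to\varphi$ or $(D)$. Since Lemma \ref{lemm:almostloeb} turns $\vdash\Box\bot\to\varphi$ into $(B)$, this reads $(A)\Rightarrow[(B)\ \text{or}\ (D)]$, which is the heart of the argument.

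With this dichotomy the two directions are short bookkeeping. For the implication from right to left I would assume $(A)$ and $(B)$; if $[(A)\Rightarrow(C)]$ is the operative disjunct we are done, and if $(D)$ is, then Lemma \ref{lemm:almostloeb} rewrites $(B)$ as $\vdash\neg\varphi$ or $\vdash\Box\bot\to\varphi$, the latter clashing with $(D)$ because together they give $\vdash\Diamond\top$, so $\vdash\neg\varphi$ holds and hence $(C)$, as $\bot$ is a trivial disjunction of $\Box$-formulas. For the converse I would assume $[(A)\wedge(B)\Rightarrow(C)]$; if $(D)$ holds we are finished, and otherwise, assuming $(A)$, the dichotomy together with $\neg(D)$ yields $(B)$, so the hypothesis delivers $(C)$, that is $[(A)\Rightarrow(C)]$.

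The one genuinely delicate step is the leaf-dichotomy, since it is exactly what forces the condition $(D)$ to appear; the remainder rests on the repeatedly used incompatibility of $\vdash\Box\bot\to\varphi$ with $(D)$, which holds simply because $\not\vdash\Diamond\top$ in \gl. No new model constructions are needed beyond the classification of essentially $\Sigma_1$ formulas and Lemma \ref{lemm:almostloeb}.
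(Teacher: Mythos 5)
Your proposal is correct and follows essentially the same route as the paper: both directions rest on Lemma \ref{lemm:almostloeb} together with the classification of essentially $\Sigma_1$ formulas as disjunctions of $\Box$-formulas, and your ``leaf dichotomy'' ($\Sigma(\psi)$ forces $\vdash\Box\bot\to\psi$ or $\vdash\Box\bot\to\neg\psi$, according as the disjunction is non-empty or empty) is exactly the case split the paper performs via Lemma \ref{lemm:derivablerules}$(\romannumeral 6)$ and the non-emptiness of the disjunction representing $\varphi\wedge\Box\varphi$. The only difference is presentational: you isolate that dichotomy as a reusable fact and phrase the $\Uparrow$ direction as deriving $\vdash\neg\varphi$ rather than showing the antecedent $\Sigma(\varphi\wedge\Box\varphi)\wedge\Sigma(\varphi\wedge\Box\neg\varphi)$ is outright contradictory, which is the contrapositive of the same point.
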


\begin{proof}
$\Uparrow$. Clearly, if $\Sigma (\varphi \wedge \Box \varphi ) \Rightarrow \Sigma (\varphi )$,
also 
$\Sigma (\varphi \wedge \Box \varphi ) \wedge \Sigma (\varphi \wedge \Box \neg \varphi )
\Rightarrow \Sigma (\varphi )$. Thus, suppose $\vdash \varphi \rightarrow \Diamond \top$,
or put differently $\vdash \Box \bot \rightarrow \neg \varphi$.
If now $\vdash \neg \varphi$, then clearly $\Sigma (\varphi)$, whence
$\Sigma (\varphi \wedge \Box \varphi ) \wedge \Sigma (\varphi \wedge \Box \neg \varphi )
\Rightarrow \Sigma (\varphi )$, so, we may assume that 
$\nvdash \neg \varphi$.
It is clear that now $\neg \Sigma (\varphi \wedge \Box \neg \varphi )$. For, suppose
$\Sigma (\varphi \wedge \Box \neg \varphi )$, then by Lemma \ref{lemm:almostloeb}
we see $\vdash \Box \bot \rightarrow \varphi$, whence $\vdash \Diamond \top$. Quod non.
Thus, $\vdash \Box \bot \rightarrow \neg \varphi \Rightarrow \neg \Sigma (\varphi 
\wedge \Box \neg \varphi )$ and thus certainly
$\Sigma (\varphi \wedge \Box \varphi ) \wedge \Sigma (\varphi \wedge \Box \neg \varphi )
\Rightarrow \Sigma (\varphi )$.

$\Downarrow$. Suppose 
$\Sigma (\varphi \wedge \Box \varphi ) \wedge \neg \Sigma (\varphi)$
and $\nvdash \Box \bot \rightarrow \neg \varphi$. To obtain our result, we only have
to prove $\Sigma (\varphi \wedge \Box \neg \varphi )$.

As $\nvdash \Box \bot \rightarrow \neg \varphi$, also 
$\nvdash \neg \varphi \vee \Diamond \neg \varphi$. Thus, under the assumption
that $\Sigma (\varphi \wedge \Box \varphi)$, we can find (a non-empty collection of)
$C_i$ with $\vdash \varphi \wedge \Box \varphi \leftrightarrow \dis_i \Box C_i$.
In this case, clearly 
$\vdash \Box \bot \rightarrow \dis_i \Box C_i \rightarrow \varphi$, whence, by 
Lemma \ref{lemm:almostloeb} we conclude $\Sigma (\varphi \wedge \Box \neg \varphi)$.
\end{proof}




\bibliographystyle{plain}


\end{document}